\newcommand{\R}{\mathbb{R}}
\newcommand{\Q}{\mathbb{Q}}
\newcommand{\Z}{\mathbb{Z}}
\newcommand{\N}{\mathbb{N}}
\newcommand{\su}{\mathcal{S}}
\newcommand{\e}{\varepsilon}
\newcommand{\de}{\delta}
\newcommand{\M}{\mathcal{M}}
\DeclareRobustCommand{\rchi}{{\mathpalette\irchi\relax}}
\newcommand{\irchi}[2]{\raisebox{\depth}{$#1\chi$}}
\newtheorem{thm}{Theorem}
\newtheorem{lem}{Lemma}
\theoremstyle{definition}
\newtheorem{conj}{Conjecture}
\newtheorem*{prob}{Main Problem}
\theoremstyle{remark}
\newtheorem{rem}{Remark}
\newtheorem{ex}{Example}
\numberwithin{equation}{section}
\author{Jing-Jing Huang}
\address{
Jing-Jing Huang: Department of Mathematics and Statistics, University of Nevada, Reno,
1664 N. Virginia St., Reno, NV 89557}
\email{jingjingh@unr.edu}
\dedicatory{Dedicated to Professor Robert C. Vaughan on the occasion of his 70th birthday}
\thanks{Research is supported by the UNR VPRI startup grant 1201-121-2479}
\subjclass[2010]{Primary 11J83, Secondary 11J13, 11J25}
\begin{document}

\title
[rational points near hypersurfaces]
{The density of rational points near hypersurfaces}

\begin{abstract}
We establish a sharp asymptotic formula for the number of rational points up to a given height and within a given distance from a hypersurface. Our main innovation is a bootstrap method that relies on the synthesis of Poisson summation, projective duality and the method of stationary phase. This has surprising applications to counting rational points lying on the manifold; indeed, we are able to prove an analogue of Serre's Dimension Growth Conjecture (originally stated for projective varieties) in this general setup.  As another consequence of our main counting result, we solve the generalized Baker-Schmidt problem in the simultaneous setting for hypersurfaces. 
\end{abstract}
\maketitle

\section{The Distribution of Rational Points near  Manifolds}\label{s1}
A fundamental theme in mathematics is the study of integral and rational points
on algebraic varieties. A great variety of  analytic, algebraic, geometric and arithmetic tools have been introduced into play in this fast growing field, which has a beautiful name, \emph{diophantine geometry}, after Serge Lang published a book with the same title in 1962 \cite{la}. It turns out that one can approach this problem from a broader perspective, namely studying the integral/rational points lying `close' to a manifold.

Let $\mathcal{M}$ be a bounded submanifold of $\mathbb{R}^n$ of dimension $m$. Given $Q\ge2$ and $\delta\in(0,1/2)$, let
\begin{equation}\label{e1.1}N_{\mathcal{M}}(Q,\delta):=\#\left\{{\mathbf{p}}/q\in\mathbb{Q}^n: 1\le q\le Q, \mathrm{dist}_\infty\left({\mathbf{p}}/q,\mathcal{M}\right)\le{\delta}/q\right\},
\end{equation}
where $\mathbf{p}\in\mathbb{Z}^n, q\in\mathbb{Z}$ and  $\mathrm{dist}_\infty(\cdot,\cdot)$ is the $L^\infty$ distance on $\mathbb{R}^n$. The following intricate problem presents a major challenge.

\begin{prob}
Estimate $N_{\mathcal{M}}(Q,\delta)$ for a `generic' manifold $\mathcal{M}$.
\end{prob}

Though the distribution of rational points near manifolds is a very interesting problem in its own right, progress on the Main Problem also has many significant consequences for other problems in number theory.  See \S  \ref{s2} for an application to a diophantine inequality, \S\ref{s3} for an application to the dimension growth conjecture and \S \ref{s4} for an application to metric diophantine approximation on manifolds. For other applications, see references \cite{Elk,HR,Maz,Sch}. 

We start by observing the following trivial bound:
\begin{equation}\label{e1.2}
 N_{\mathcal{M}}(Q,\delta)\ll Q^{m+1}.
\end{equation}
Throughout this paper, we will use the notation $\ll$ and $O$ freely. The implicit constants will depend on the underlying manifold and/or some fixed parameters say $\e$, but they will never depend on varying parameters $Q$ and $\de$.
\begin{ex}\label{ex1}
If $\M$ contains a piece of a rational $m$-linear subspace of $\R^n$, then
$$
N_{\mathcal{M}}(Q,\delta)\asymp Q^{m+1}.
$$
\end{ex}

Example \ref{ex1} shows that the trivial bound \eqref{e1.2} is the sharpest general bound for all manifolds. So the prospect of establishing nontrivial bounds has to rely on the elimination of \emph{flatness} of the manifold,
which means the manifold $\M$ under question has to be \emph{properly curved}.

Let $k=n-m$ be the codimension of $\M$. Based on a simple probabilistic argument  one should expect the following heuristic bound
\begin{equation}\label{e1.0}
N_{\mathcal{M}}(Q,\delta)\asymp \delta^k Q^{m+1}.
\end{equation}
Explicit examples suggest that this heuristic does not hold unconditionally, unless one imposes a reasonable lower bound on $\delta$ in terms of $Q$. Indeed, the manifold $\M$ may contain many rational points.

\begin{ex}\label{ex2}
Consider the parabola $\mathcal{P}$: $y=x^2$, $x\in[0,1]$. Clearly $(a/q,a^2/q^2)\in \mathcal{P}$ with $1\le a\le q$ and $q\le\sqrt{Q}$. So
$N_\mathcal{P}(Q,\delta)\ge N_\mathcal{P}(Q,0) \ge \sum_{q\le \sqrt{Q}}q\gg Q$.
\end{ex}

Example \ref{ex2} shows that when $Q$ is fixed the heuristic \eqref{e1.0} does not hold for arbitrarily small $\delta$. So there has to be an error term which does not go to zero as $\delta\to 0$ or one has to insist that the $\delta$ not be smaller than a certain threshold in order for the heuristic main term $\delta^k Q^{m+1}$ to be dominant. The following example indicates generally where that threshold is in terms of $Q$. 

\begin{ex}\label{ex3}
 Consider the hypersphere $\mathcal{S}=S^{n-1}$ in $\R^n$ $(n\ge2)$ defined by the equation $$x_1^2+x_2^2+\cdots+x_n^2=1.$$ 
As a standard application of the Hardy-Littlewood circle method, one can show that the asymptotic formula $$N_{S^{n-1}}(Q,0)\sim c_{n}Q^{n-1}$$
holds as $Q\rightarrow\infty$, where $c_n$ is a positive constant which can be written as an infinite product of local densities. 
\end{ex}

It is readily verified that the heuristic \eqref{e1.0} holds for Example \ref{ex3} only if $\delta Q^{1/k}\to\infty$. 
Therefore, it is reasonable to propose the following plausible conjecture.

\begin{conj}\label{c0}[The Main Conjecture] For any compact submanifold $\mathcal{M}$ of $\mathbb{R}^n$ with proper curvature conditions, we have
$$N_\mathcal{M}(Q,\delta)\sim c_\M \delta^kQ^{m+1}$$
when $\de\ge Q^{-\frac1k+\varepsilon}$ for some $\varepsilon>0$ and $Q\to \infty$. 
\end{conj}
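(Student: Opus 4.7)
The plan is to attack the conjecture in the hypersurface case $k=1$, where after a smooth partition of unity we may represent $\M$ locally as a graph $x_n=f(\x')$ over a bounded domain $U\subset\R^{n-1}$, with $f$ smooth and of nonvanishing Gaussian curvature. The existence of an integer $p_n$ with $|p_n-qf(\mathbf{p}'/q)|\le\de$ is equivalent to $\|qf(\mathbf{p}'/q)\|\le\de$, where $\|\cdot\|$ denotes distance to the nearest integer. Letting $\rho$ be a smooth cutoff supported in $U$, the problem becomes estimating
\[
N_\M(Q,\de)\approx \sum_{q\le Q}\,\sum_{\mathbf{p}'\in\Z^{n-1}}\rho(\mathbf{p}'/q)\,\mathbf{1}_{\|qf(\mathbf{p}'/q)\|\le\de}.
\]

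The first step is a Fourier expansion of the indicator $\mathbf{1}_{\|t\|\le\de}$ into circle frequencies $h_n\in\Z$, followed by Poisson summation in $\mathbf{p}'\in\Z^{n-1}$ for each fixed $(q,h_n)$. The zero frequency $(h_n,\mathbf{h}')=(0,\mathbf{0})$ contributes the heuristic main term $c_\M\de Q^{m+1}$. The remaining terms are oscillatory integrals with large parameter $q$ and phase $\Phi(\x')=h_n f(\x')+\mathbf{h}'\cdot\x'$, where $|h_n|$ is effectively bounded by $\de^{-1}$ and $\mathbf{h}'\in\Z^{n-1}$.

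Next I would apply the method of stationary phase to these oscillatory integrals. A critical point $\x'_0$ satisfies $\nabla f(\x'_0)=-\mathbf{h}'/h_n$; by nondegeneracy of the Gauss map of $\M$, this identifies the dual direction $[\mathbf{h}':h_n]$ with a point on the \emph{projective dual hypersurface} $\M^*$. The stationary phase expansion then shows that the total contribution of non-zero frequencies is itself structurally a counting function of the form $N_{\M^*}(Q^*,\de^*)$ with scaled parameters $Q^*\asymp Q\de^{-1}$ and $\de^*\asymp Q^{-1}$. This self-duality of the counting problem under projective duality is the key innovation of the argument.

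The bootstrap then proceeds by iteration. Starting with the trivial bound \eqref{e1.2} applied to $N_{\M^*}$, we derive a first nontrivial estimate for $N_\M$, which in turn sharpens the estimate for $N_{\M^*}$, and so on. Each iteration shrinks the error term by a fixed power saving, and under the hypothesis $\de\ge Q^{-1+\e}$ the errors are ultimately dominated by the main term $c_\M\de Q^{m+1}$. The principal obstacle I foresee is in handling frequencies $(\mathbf{h}',h_n)$ for which the critical point of $\Phi$ is absent, degenerate, or close to the boundary of $U$: there stationary phase yields no direct gain, and one must instead deploy van der Corput--type second derivative bounds, combined with a smooth frequency decomposition based on distance from $\M^*$. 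Making the bootstrap close precisely at the threshold $\de= Q^{-1+\e}$, and pinning down the singular-series constant $c_\M$, are the most delicate components of the argument.
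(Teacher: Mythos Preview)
Your proposal is essentially the paper's own approach to the hypersurface case (Theorems~\ref{t2}, \ref{t1}, \ref{t3}): Fourier-expand the indicator, apply Poisson summation in the lattice variable, use stationary phase to identify the contribution with a counting problem on the dual hypersurface $\mathcal{S}^*$, and then bootstrap via the self-duality $\mathcal{S}^{**}=\mathcal{S}$ (the paper's Lemma~\ref{l5}). The only minor discrepancies are that the paper dispatches the off-critical frequencies by non-stationary phase (repeated integration by parts, Lemma~\ref{l1}) rather than second-derivative bounds, and the dual parameters actually emerge as $Q^*\asymp\de^{-1}$, $\de^*\asymp Q^{-1}$ (not $Q^*\asymp Q\de^{-1}$); otherwise your outline matches the paper closely.
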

Here and throughout the paper, a compact submanifold of $\mathbb{R}^n$ means that it is a bounded immersed submanifold of $\mathbb{R}^n$ with boundary. 
Conjecture \ref{c0} implies that 
\begin{equation}\label{e1.3}
N_\mathcal{M}(Q,\delta)\ll \delta^kQ^{m+1}+Q^{m+\e}\quad\text{for all }\de\text{ and }Q
\end{equation}
by the monotonicity of $N_{\M}(Q,\delta)$ in $\de$,
and that
\begin{equation}\label{e1.4}
N_\mathcal{M}(Q,\delta)\gg \delta^kQ^{m+1}\quad\text{ for }\delta\ge Q^{-\frac1k+\e}\text{ and large }Q.
\end{equation}
Conjecture \ref{c0} will be hitherto referred to as \emph{the Main Conjecture}, which is the primary object of interest in this paper.

We emphasize again that a \emph{rigid curvature condition} is absolutely necessary in the Main Conjecture, since a thin neighborhood of a flat point in a manifold could contain too many rational points, cf. Example \ref{ex1}. We will elaborate more on these curvature conditions later.
\begin{ex}\label{ex4}
Consider the Fermat curve $\mathcal{F}_n$
$$
x^n+y^n=1.
$$
$\mathcal{F}_n$ has order of contact $n-1$ with the lines $x=1$ and $y=1$. Elementary calculation shows that
$$
N_{\mathcal{F}_n}(Q,\de)\gg \delta^{\frac1 n}Q^{2-\frac1n}.
$$
This means that the bound
$$
N_{\mathcal{F}_n}(Q,\de)\ll \de Q^2+Q^{1+\e}
$$
cannot hold, when
$$
\de\gg Q^{-\frac1{n-1}+\e}.
$$
\end{ex}
Although the Fermat curve $\mathcal{F}_n$ is not globally flat, it has a few \emph{virtually flat} points. Example \ref{ex4} shows that even when the curvature only vanishes at one point, the Main Conjecture could break down.

 Though a great deal is known about sharp upper and lower bounds for $N_{\mathcal{M}}(Q,\delta)$, The Main Problem  yet remains wide open in general. Below we will briefly survey the history of this problem in the literature; in this way we put our main result in context.

\subsection{Planar curves} Let $\mathcal{C}$ be a compact curve in $\mathbb{R}^2$, with curvature bounded away from zero. This is the first nontrivial case that has been studied extensively in the literature. By the implicit function theorem,  we can assume, without loss of generality, that $\mathcal{C}$ is given by $f: I\rightarrow\R$ with some finite interval $I$. Huxley \cite{hux} was the first to obtain a near optimal upper bound for $C^2$ curves, 
\begin{equation}\label{e1.5}
N_{\mathcal{C}}(Q,\delta)\ll_{\mathcal{C}} \delta^{1-\varepsilon} Q^2+Q\log Q \quad\text{ for any  }\delta\text{ and } Q.\footnote{The version we cite here is actually \cite[Lemma 2.2]{VV}, which is a direct consequence of Huxley's main result. Huxley counts rational points uniquely, while we count with multiplicities. It is however a routine exercise to transform one version into the other using M\"{o}bius transform/inversion, cf. Remark \ref{remark3}.}
\end{equation}
which comes within a whisker of proving the predicted upper bound \eqref{e1.3} for planar curves. The latter was remarkably established by Vaughan and Velani \cite{VV}, who showed that
\begin{equation}\label{e1.7}
N_{\mathcal{C}}(Q,\delta)\ll_{\mathcal{C}} \delta Q^2+Q^{1+\e}
\end{equation}
for  $C^3$ curves.

On the other hand, a sharp lower bound, established by Beresnevich, Dickinson and Velani \cite{BDV}, says 
$$N_{\mathcal{C}}(Q,\delta)\gg_{\mathcal{C}} \delta Q^2\quad\text{ for }\delta\gg Q^{-1}\text{ and }\delta Q\to\infty$$ for $C^3$ curves $\mathcal{C}$ having nonvanishing curvature at at least one point. 

These two undoubtedly fascinating results give satisfactory answers to the Main Problem by establishing the corresponding upper and lower bounds \eqref{e1.3}, \eqref{e1.4} respectively in the case of planar curves. However, the corresponding asymptotic formula had not been established until recently the author  \cite{hua1} proved  that

\begin{equation}\label{e1.8}
{N}_\mathcal{C}(Q,\delta)=|I|\delta Q^2+O\left(\delta^{\frac12}\left(\log \delta^{-1}\right)Q^{\frac32}+Q^{1+\varepsilon}\right)
\end{equation}
for $C^3$ curves, which solves the Main Conjecture in this case.

The proof of \eqref{e1.8} relies on the work of Huxley \cite{hux} and Vaughan-Velani \cite{VV} and in fact the harmonic analysis turns the upper bound \eqref{e1.5} into an asymptotic formula. We refer interested readers to \cite{hua1} for more details and discussions about the strategy behind the proof of \eqref{e1.8}. The primary goal of the current memoir is to investigate the Main Problem for manifolds of higher dimensions, especially hypersurfaces. 

\subsection{General manifolds}\label{s1.2} Higher dimensions present many more challenges as well as many more opportunities.  In the spectacular work \cite{bere}, Beresnevich established the sharp lower bound \eqref{e1.4} for any analytic submanifold $\mathcal{M}$ of $\mathbb{R}^n$ which is nondegenerate at one point,  namely 
$$N_{\mathcal{M}}(Q,\delta)\gg_{\M} \delta^k Q^{m+1}\quad\text{ for any } \delta\gg Q^{-1/k}.$$ 
A manifold $\mathcal{M}$ is \emph{nondegenerate} at $x\in\mathcal{M}$ if $\mathcal{M}$ has at most finite order of contact with any hyperplane that passes through ${x}$.

The complementary upper bound \eqref{e1.3} as well as the Main Conjecture still remains a major challenging question. A quite intriguing feature is that nondegeneracy alone is not enough to reverse the above inequality sign, cf. Example \ref{ex4}; a rigid curvature bound has to be imposed.  

A straightforward application of exponential sum techniques, Poisson summation and optimal pointwise bounds on oscillatory integrals yields a bound which is far from the conjectured upper bound \eqref{e1.3}. This is partly due to the fact that, when the distance $\delta$ is very small, the resulting exponential sum is quite long, which causes an enormous amount of loss in the estimate if we use a pointwise bound on each resulting oscillatory integral. Currently there are a couple of approaches to circumvent this obstacle. Huxley's elementary method \cite{hux} represents an ingenious implementation of Swinnerton-Dyer's determinant method \cite{SD}; this unfortunately seems to only pertain to plane geometry and is therefore unlikely to entail generalizations to higher dimensions. If insisting on using the fourier analytic approach, one can enhance the pointwise bound provided that the manifold is locally at least 2-convex, which only holds generically for manifolds with very small codimension and completely rules out curves \cite{BeD}. Another possibility is a linearization argument found by Sprindz\v{u}k \cite{Spr}, where he constructs small linear patches to approximate the manifold and hence reduces the length of the exponential sum. Beresnevich, Vaughan, Velani and Zorin  \cite{BVVZ} explored this idea further and established, among other things, that, in the case when the manifold is a hypersurface $\mathcal{S}$ with Gaussian curvature bounded away from zero,
\begin{equation}\label{e1.6}
 N_\mathcal{S}(Q,\delta)\ll \delta Q^{n}+Q^{n-1+\frac2{n+1}}(\log Q)^{\frac{2n-2}{n+1}}.
\end{equation}
Here the first term $\delta Q^n$ matches with Conjecture \ref{c0}, whereas the second term still misses the conjectured upper bound \eqref{e1.3} by roughly $Q^{\frac2{n+1}}$.

We are able to sharpen this further and obtain the conjectured upper bound \eqref{e1.3}. In fact, Theorem \ref{t2} below even refines \eqref{e1.3}. Let 
\begin{equation}\label{e1.16}
l=\max\left(\left\lfloor\frac{n-1}2\right\rfloor+5,n+1\right),
\end{equation}
\begin{equation}\label{e1.14}
E_3(Q)=Q^{2}\exp(c\sqrt{\log Q})
\end{equation}
and
\begin{equation}\label{e1.15}
E_n(Q)=Q^{n-1}(\log Q)^\kappa,\quad n\ge4
\end{equation}
for some positive constants $c$ and $\kappa$ that can be effectively calculated from our proof.

\begin{thm}\label{t2}
Let $\mathcal{S}$ be a compact $C^l$ hypersurface in $\mathbb{R}^n$ with Gaussian curvature bounded away from zero. Then
 
$$N_\mathcal{S}(Q,\delta)\ll \delta Q^{n}+E_n(Q), $$
where $c, \kappa$ and the implied constant only depend on $\mathcal{S}$.
\end{thm}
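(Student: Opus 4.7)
The plan is to prove Theorem \ref{t2} by a harmonic-analytic bootstrap that sharpens the bound \eqref{e1.6}. First, decompose
$$N_\mathcal{S}(Q,\delta) = \sum_{q\le Q} r_q(\delta),$$
where $r_q(\delta)$ counts $\mathbf{p} \in \Z^n$ with $\mathrm{dist}_\infty(\mathbf{p}, q\mathcal{S})\le \delta$. After replacing the characteristic function of the $\delta$-tube about $q\mathcal{S}$ by a smoothed version on scale $\delta$, apply Poisson summation. The zero frequency contributes the main term of size $\asymp \delta Q^n$, together with an error consisting of a sum over nonzero $\mathbf{k}\in\Z^n$ of oscillatory integrals of the shape
$$I_q(\mathbf{k}) = \int_\mathcal{S} \psi(\mathbf{x}) \, e(q\mathbf{k}\cdot \mathbf{x}) \, d\sigma(\mathbf{x}),$$
weighted by the Fourier coefficients of the smoothing kernel.

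Next, evaluate $I_q(\mathbf{k})$ by the method of stationary phase. Because the Gaussian curvature of $\mathcal{S}$ is bounded away from zero, the phase $\mathbf{x}\mapsto \mathbf{k}\cdot \mathbf{x}$ has only isolated nondegenerate critical points on $\mathcal{S}$, namely the points $\mathbf{x}_c$ where the outer normal to $\mathcal{S}$ is parallel to $\mathbf{k}$. Stationary phase yields an asymptotic of the form
$$I_q(\mathbf{k}) \sim C\,(q|\mathbf{k}|)^{-(n-1)/2}\, e\!\left(q\,h(\mathbf{k}/|\mathbf{k}|)\,|\mathbf{k}|\right),$$
where $h$ is (up to sign) the support function of $\mathcal{S}$. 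This is where \emph{projective duality} enters: the map $\mathbf{k}\mapsto (h(\mathbf{k}/|\mathbf{k}|),\mathbf{k}/|\mathbf{k}|)$ parameterizes the projective dual $\mathcal{S}^*$, which by classical algebraic geometry also has nonvanishing Gaussian curvature. Thus the oscillating factor $e(q\,h(\mathbf{k}/|\mathbf{k}|)|\mathbf{k}|)$ is nontrivially small unless $\mathbf{k}$ is close to an integer scalar multiple of a point on $\mathcal{S}^*$.

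The bootstrap is the key new ingredient. A naive triangle inequality combined with the pointwise bound $|I_q(\mathbf{k})|\ll (q|\mathbf{k}|)^{-(n-1)/2}$ only recovers \eqref{e1.6}, losing roughly $Q^{2/(n+1)}$. To do better, one observes that, after collecting the stationary phase expansion and summing over $q$, the main contribution to the oscillatory error comes from pairs $(q,\mathbf{k})$ for which the dual direction $\mathbf{k}/|\mathbf{k}|\in \mathcal{S}^*$ admits a good rational approximation; such contributions are essentially counted by $N_{\mathcal{S}^*}$ evaluated at appropriate parameters. One therefore obtains an inequality of the schematic form
$$N_\mathcal{S}(Q,\delta) \ll \delta Q^n + N_{\mathcal{S}^*}(Q',\delta'),$$
and by symmetry also $N_{\mathcal{S}^*}(Q',\delta') \ll \delta' (Q')^n + N_\mathcal{S}(Q'',\delta'')$. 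Iterating this exchange between $\mathcal{S}$ and $\mathcal{S}^*$ a finite number of times, starting from any reasonable input (e.g.\ \eqref{e1.6} or even the trivial bound), drives the error exponent down to $n-1$, with the $(\log Q)^\kappa$ and $\exp(c\sqrt{\log Q})$ factors recorded in $E_n(Q)$ arising from the finite bootstrap depth and the logarithmic losses per iteration.

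The principal obstacle will be executing the bootstrap cleanly. This requires a dyadic decomposition in both $|\mathbf{k}|$ and $q$; a careful truncation of the stationary phase expansion, which is what forces the regularity assumption $C^l$ with $l$ as in \eqref{e1.16}; a separate treatment via repeated integration by parts of the $\mathbf{k}$ whose Gauss-map preimage lies near $\partial \mathcal{S}$ or far from any normal direction of $\mathcal{S}$; and a careful tracking of constants across iterations so that the bootstrap converges to the asserted bound without blowing up, and produces exactly the error functions $E_n(Q)$.
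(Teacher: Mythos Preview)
Your plan is essentially the paper's approach: Poisson summation, stationary phase, projective duality, and an iterated bootstrap between $\mathcal{S}$ and $\mathcal{S}^*$. The paper works in Monge form (Poisson over $\Z^{n-1}$ in the horizontal variables, together with a Fej\'er kernel in the vertical direction playing the role of your tube smoothing), rather than your surface-measure formulation with $\mathbf{k}\in\Z^n$, but this difference is cosmetic; your pair $(q,\mathbf{k})$ corresponds to the paper's triple $(q,j,\mathbf{k})$.

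There is, however, one genuine missing ingredient. After stationary phase and summing over $q$, what one actually obtains is a bound of the shape
\[
N_\mathcal{S}^w(Q,\delta)\le C_0\delta Q^n + C_1 A\Bigl(\delta^{-\frac{n-1}{2}}Q^{\frac{n-1}{2}}\log Q + \delta^{\frac{n+1}{2}-\beta^*}Q^{\frac{n+1}{2}}\Bigr),
\]
where $\beta^*$ is the current error exponent for $\mathcal{S}^*$. Both exponents of $\delta$ on the right are negative, so this is worthless for small $\delta$ and does \emph{not} give a clean inequality of your schematic form $N_\mathcal{S}(Q,\delta)\ll \delta Q^n + N_{\mathcal{S}^*}(Q',\delta')$. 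The paper's fix, which it explicitly flags as the main innovation, is the trivial but crucial observation that $N_\mathcal{S}^w(Q,\delta)$ is nondecreasing in $\delta$: for $\delta\le Q^{\beta-n}$ one replaces $\delta$ by $Q^{\beta-n}$ before invoking the display above, and this is precisely what produces the exponent recursion $\beta = n - \frac{n-1}{2\beta^*-(n-1)}$. That recursion, iterated $\asymp\log\log Q$ times for $n\ge4$ and $\asymp\sqrt{\log Q}$ times for $n=3$, is what yields the specific error terms $E_n(Q)$. Your outline does not mention monotonicity, and without it the bootstrap does not close.
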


By compactness, $\mathcal{S}$ may be covered by finitely many local charts. So, without loss of generality, we may assume that the hypersurface $\mathcal{S}$ is given in the Monge form 
\begin{equation}\label{e1.9}
(\mathbf{x},f(\mathbf{x})),\quad \mathbf{x}=(x_1,x_2,\ldots,x_{n-1})\in\mathcal{D}, f\in C^{l}(\mathcal{D})
\end{equation}
where $\mathcal{D}\subseteq\R^{n-1}$ is a connected bounded open set.  The curvature condition in Theorem \ref{t2} then guarantees that for all $\mathbf{x}\in\mathcal{D}$ the Hessian $\nabla^2f(\mathbf{x}):=\{\frac{\partial^2{f}}{\partial x_i\partial x_j}\}$ satisfies 
\begin{equation}\label{e1.10}
0<c_1\le|\det \nabla^2f(\mathbf{x})|\le c_2
\end{equation}
for some positive numbers $c_1$ and $c_2$.

Under the above Hessian condition, we know the gradient $\nabla f:=\left(\frac{\partial f}{\partial x_1},\frac{\partial f}{\partial x_2},\ldots,\frac{\partial f}{\partial x_{n-1}}\right)$ is a diffeomorphism in a sufficiently small neighborhood of any $\mathbf{x}\in \mathcal{D}$ by the Inverse Function Theorem. So we may take $\mathcal{D}$ small enough such that 
\begin{equation}\label{e1.12}
\nabla f: \mathcal{D}\rightarrow \nabla f(\mathcal{D})\textrm{ is a diffeomorphism}.
\end{equation}

For the method in this paper, it is more convenient to smooth the counting function with a weight function $w\in C_0^\infty(\R^{n-1})$ with $\mathrm{supp}(w)\subseteq\mathcal{D}$. Let
\begin{equation}\label{e1.11}
N_{\su}^w(Q,\delta):=\sum_{\substack{\mathbf{a}\in\Z^{n-1}\\q\le Q\\ \|qf(\mathbf{a}/q)\|<\delta}}w\left(\frac{\mathbf{a}}{q}\right).
\end{equation}

\begin{thm}\label{t1}
Let $\mathcal{S}$ be a hypersurface given by \eqref{e1.9} and satisfy \eqref{e1.10} and \eqref{e1.12}. Then
 we have
$$
N_\mathcal{S}^w(Q,\delta)=\frac{2\hat{w}(0)}{n} \delta Q^n+O_{\mathcal{S},w}(E_n(Q)),
$$
where the constants $c$ and $\kappa$ only depend on $\mathcal{S}$ and $w$.

\end{thm}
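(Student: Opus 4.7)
The plan is to attack $N_\su^w(Q,\de)$ by Fourier analysis, in the synthesis advertised in the abstract---Poisson summation in $\mathbf{a}$, projective duality via the Legendre transform, and the method of stationary phase as the bridge---and to close up the error exponent with a bootstrap iteration. First I would replace the sharp cut $\|qf(\mathbf{a}/q)\|<\de$ by smooth $1$-periodic Selberg-type majorants and minorants $\psi_\pm$ agreeing with the indicator of $(-\de,\de)\!\!\mod 1$ outside transition regions of length $\eta$ (a parameter to be optimized), and expand them as $\psi_\pm(t)=\sum_{l\in\Z}\hat\psi_\pm(l)e(lt)$ with $\hat\psi(l)\ll\min(\de,|l|^{-N}\eta^{-N+1})$. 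The $l=0$ piece is then elementary: one application of Poisson to $\sum_\mathbf{a}w(\mathbf{a}/q)$ gives $q^{n-1}\hat w(0)+O_N(q^{-N})$, and summation over $q\le Q$ combined with $\hat\psi(0)=2\de+O(\eta)$ yields the main term $\frac{2\hat w(0)}{n}\de Q^n$ with admissible error.

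For $l\ne 0$, Poisson summation in $\mathbf{a}$ converts the inner sum into
\[
q^{n-1}\sum_{\mathbf{b}\in\Z^{n-1}}\int_{\R^{n-1}}w(\mathbf{x})\,e\bigl(lqf(\mathbf{x})-q\mathbf{b}\cdot\mathbf{x}\bigr)d\mathbf{x}.
\]
The phase $\Phi(\mathbf{x})=lq\bigl(f(\mathbf{x})-(\mathbf{b}/l)\cdot\mathbf{x}\bigr)$ has a stationary point exactly when $\nabla f(\mathbf{x}_0)=\mathbf{b}/l$, which by \eqref{e1.12} lies in $\mathcal{D}$ iff $\mathbf{b}/l\in\nabla f(\mathcal{D})$, and the Hessian there is $lq\,\nabla^2 f(\mathbf{x}_0)$, nondegenerate of uniform size by \eqref{e1.10}. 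Stationary phase therefore delivers an asymptotic expansion of each integral with leading amplitude $\asymp(|l|q)^{-(n-1)/2}$ and leading phase $-lq\,g(\mathbf{b}/l)$, where $g$ is the Legendre dual of $f$. Because $\nabla f$ is a diffeomorphism with $|\det\nabla^2 f|\asymp 1$, the dual function $g$ lies in $C^l(\nabla f(\mathcal{D}))$ with $|\det\nabla^2 g|\asymp 1$, so the dual hypersurface $\su^*$ defined by $g$ satisfies \eqref{e1.10} and \eqref{e1.12} identically---this is the projective-duality input.

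This duality drives the bootstrap. After dyadic decomposition $|l|\sim L$, $q\sim R$ and summing the Fourier coefficients $\hat\psi(l)$ against the phase $e(-lq g(\mathbf{b}/l))$, the $l\ne0$ contribution reassembles, up to the explicit stationary-phase amplitude and its lower-order remainders, into a weighted count of the same shape as $N^{w^*}_{\su^*}$ at dual parameters essentially determined by $L,R,\eta$, with an overall saving of $(LR)^{-(n-1)/2}$. Feeding an input bound $N_\su^w(Q,\de)\ll\de Q^n+Q^{n-1+\alpha}$ into this identity returns the same-shape bound for $\su^*$ with a strictly smaller $\alpha$; finitely many iterations drive the exponent down to the form $E_n(Q)$ of \eqref{e1.14}, \eqref{e1.15}, with the residual subpolynomial factor absorbing the dyadic summation and the limit of the iterative gain.

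The principal obstacle will be the boundary regime, where $\mathbf{b}/l$ lies within distance $\lesssim(|l|q)^{-1/2}$ of $\partial(\nabla f(\mathcal{D}))$: there the putative stationary point collides with $\partial\,\mathrm{supp}(w)$, the nondegeneracy of $\nabla^2 f$ alone is no longer sufficient, and one must insert a smooth partition of unity adapted to $|l\nabla f(\mathbf{x})-\mathbf{b}|$ and appeal to repeated integration by parts in the complementary regions. A parallel technical nuisance is tuning $\eta$ and the Fourier truncation $|l|\le\eta^{-1-\e}$ so that the smoothing error $\eta Q^n$ is balanced against the dyadic losses, and handling the low-frequency range $|l|\asymp 1$ separately, since that is the precise regime in which the dual counting function genuinely reappears and the bootstrap earns its keep.
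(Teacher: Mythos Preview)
Your proposal follows essentially the same route as the paper: Poisson summation in $\mathbf{a}$, stationary phase to produce the Legendre-dual phase $-lq\,f^*(\mathbf{b}/l)$, and a bootstrap that feeds an input bound for $N^{w^*}_{\su^*}$ back into an estimate for $N^w_\su$. The paper organizes this in two passes---first proving the upper bound (Theorem~\ref{t2}) via the Fej\'er kernel and the bootstrap of Lemma~\ref{l5}, then deducing the asymptotic (Theorem~\ref{t1}) by switching to Selberg's trigonometric polynomials of degree $J=Q$ and inserting the completed Theorem~\ref{t2} bound for the dual just once---whereas your single-pass scheme with smooth majorants/minorants would amount to the same thing.

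The one point you leave implicit, and which the paper singles out as its main innovation, is the \emph{monotonicity-in-$\de$} step that actually makes the iteration close. After stationary phase and summing over $q$ via $\min(\|jf^*(\mathbf{k}/j)\|^{-1},Q)$, the error emerges in the shape
\[
\de^{-(n-1)/2}Q^{(n-1)/2}\log Q+\de^{(n+1)/2-\beta^*}Q^{(n+1)/2},
\]
which blows up as $\de\to0$ and does \emph{not} by itself ``return the same-shape bound with strictly smaller $\alpha$''. One must invoke $N^w_\su(Q,\de)\le N^w_\su(Q,Q^{\beta-n})$ for $\de\le Q^{\beta-n}$ and evaluate at that threshold to extract the explicit recursion $\beta=n-\dfrac{n-1}{2\beta^*-n+1}$, which converges to $n-1$ exponentially for $n\ge4$ and like $2+1/i$ for $n=3$ (whence the $\exp(c\sqrt{\log Q})$ in $E_3$). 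Your remark about ``tuning $\eta$'' may conceal this, but make it explicit---without it the bootstrap stalls at the very first step.

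Finally, your anticipated ``principal obstacle'' at the boundary of $\nabla f(\mathcal{D})$ is milder than you fear: the paper simply partitions $\mathbf{k}$ into $\mathscr{K}_1$ (stationary point in $\mathrm{supp}\,w$), $\mathscr{K}_2$ (far away, handled by non-stationary phase), and a thin shell $\mathscr{K}_3$ where $w(\mathbf{x}_{j,\mathbf{k}})=0$ so only the $O((qj)^{-(n+1)/2})$ remainder from Lemma~\ref{l2} survives; no scale-adapted partition of unity is needed.
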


\begin{rem}
Notice that our method still works in principle when $n=2$. The argument in the proof of Theorem \ref{t2} should yield
\begin{equation}\label{e1.13}
N_{\mathcal{C}}(Q,\delta)\ll \de Q^2+Q^{3/2}(\log Q)^\kappa
\end{equation}
for a planar curve $\mathcal{C}$ with some constant $\kappa>0$. We may compare \eqref{e1.13}  with the bounds \eqref{e1.5} and \eqref{e1.7}. Incidentally, it was also obtained implicitly by W.M. Schmidt \cite{Sc1} with the restriction $\delta\in(Q^{-2},Q^{-1})$ in order to prove the celebrated result on the extremality of planar curves \cite{Sc2}. 
\end{rem}

\begin{rem}
We clearly see from Example \ref{ex3} that, except for the logarithmic factors, the bound in Theorem \ref{t2} and the error term in Theorem \ref{t1} are sharp. 
\end{rem}

With the help of Theorem \ref{t1}, we are able to establish the Main Conjecture for hypersurfaces with nonvanishing Gaussian curvature.

\begin{thm}\label{t3}
 Let $\mathcal{S}$ be a hypersurface given by \eqref{e1.9} and satisfy \eqref{e1.10}, and $K\subseteq\mathcal{D}$ be convex and compact. Then
 we have for $\delta>Q^{-1+\varepsilon'}$ with any fixed $\varepsilon'>0$ 
 $$
N_\mathcal{S}^K(Q,\delta)=(1+o(1))\frac{2|K|}{n} \delta Q^n,
$$
where 
$$
N_{\su}^K(Q,\delta):=\sum_{\substack{\mathbf{a}\in\Z^{n-1}\\q\le Q\\ \|qf(\mathbf{a}/q)\|<\delta}}\rchi_K\left(\frac{\mathbf{a}}{q}\right)
$$
with $\rchi_K$ the characteristic function of $K$
and $|K|$ the Lebesgue measure of $K$.

\end{thm}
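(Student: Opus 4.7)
\textbf{Proof proposal for Theorem \ref{t3}.} The plan is a standard sandwich-and-smooth argument that converts the weighted asymptotic of Theorem \ref{t1} into the sharp cutoff version, exploiting the fact that $\delta\ge Q^{-1+\varepsilon'}$ gives enough headroom over the error term $E_n(Q)$.

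First I would construct, for each sufficiently small $\eta>0$ (smaller than $\operatorname{dist}(K,\partial\mathcal{D})$), a pair of smooth weights $w_\eta^\pm\in C_0^\infty(\mathcal{D})$ such that $w_\eta^-\le\rchi_K\le w_\eta^+$, both functions agree with $\rchi_K$ outside the $\eta$-neighbourhood $T_\eta:=\{\mathbf{x}:\operatorname{dist}(\mathbf{x},\partial K)\le\eta\}$, and $0\le w_\eta^\pm\le1$. The explicit construction is the standard one: mollify $\rchi_{K^{(-\eta)}}$ and $\rchi_{K^{(+\eta)}}$ (the inner and outer $\eta$-parallel bodies of $K$) with a $C^\infty$ bump of radius $\eta/2$. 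Since $K$ is convex and compact, the Steiner/Minkowski-content estimate yields
\begin{equation*}
|T_\eta|=|K^{(+\eta)}\setminus K^{(-\eta)}|\le C_K\,\eta
\end{equation*}
for all small $\eta$, with $C_K$ depending only on the surface area of $\partial K$. Consequently
\begin{equation*}
\hat{w}_\eta^\pm(0)=\int_{\mathcal{D}}w_\eta^\pm=|K|+O_K(\eta).
\end{equation*}

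Next, by the trivial monotonicity of the counting function in the weight,
\begin{equation*}
N_{\su}^{w_\eta^-}(Q,\delta)\le N_{\su}^K(Q,\delta)\le N_{\su}^{w_\eta^+}(Q,\delta).
\end{equation*}
Applying Theorem \ref{t1} to $w_\eta^\pm$ gives
\begin{equation*}
N_{\su}^{w_\eta^\pm}(Q,\delta)=\frac{2(|K|+O_K(\eta))}{n}\,\delta Q^n+O_{\su,\eta}(E_n(Q)).
\end{equation*}
The assumption $\delta\ge Q^{-1+\varepsilon'}$ gives $\delta Q^n\ge Q^{n-1+\varepsilon'}$, so from \eqref{e1.14}, \eqref{e1.15} we have $E_n(Q)=o_{\varepsilon'}(\delta Q^n)$. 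Hence, for each fixed $\eta$,
\begin{equation*}
N_{\su}^K(Q,\delta)=\frac{2|K|}{n}\,\delta Q^n\bigl(1+O_K(\eta/|K|)+o_{\eta,\varepsilon'}(1)\bigr).
\end{equation*}

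To finish, I would run the usual diagonal argument: given any prescribed tolerance, first pick $\eta$ so small that $O_K(\eta/|K|)$ is within half of it, then push $Q\to\infty$ (with $\eta$, and hence the implicit constant in $E_n(Q)$, now fixed) so that the $o_{\eta,\varepsilon'}(1)$ term is within the other half. Letting the tolerance shrink to zero produces the $(1+o(1))$ statement. The main technical point — and the only place convexity is used — is precisely the boundary estimate $|T_\eta|\ll_K\eta$; without it the sandwiching loses too much in the $\eta$-neighbourhood of $\partial K$ to be reclaimed. Everything else is bookkeeping around the uniformity of Theorem \ref{t1} in the weight.
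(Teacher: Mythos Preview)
Your sandwich strategy is correct in spirit and very close to the paper's own argument, but there is one genuine gap. Theorem~\ref{t1} requires not only \eqref{e1.10} but also \eqref{e1.12}, i.e.\ that $\nabla f$ be a diffeomorphism on $\mathcal{D}$; Theorem~\ref{t3} deliberately drops \eqref{e1.12} (this is the whole point of stating it separately---see the remark immediately following the theorem). Your mollified weights $w_\eta^\pm$ are supported on a set comparable to all of $K$, and on such a large region $\nabla f$ need not be injective, so you are not entitled to invoke Theorem~\ref{t1} for them directly. The paper handles exactly this obstruction: rather than using a single pair of smooth cutoffs, it covers $K$ (and an inner dilate $\tau_-K$) by finitely many small open balls on each of which $\nabla f$ \emph{is} a diffeomorphism---this is guaranteed locally by the inverse function theorem together with \eqref{e1.10}---and then takes smooth partitions of unity $\{w_i^+\}$, $\{w_j^-\}$ subordinate to those covers. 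Theorem~\ref{t1} is applied to each $w_i^\pm$ separately (with the small ball playing the role of $\mathcal{D}$), and the pieces are summed.

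The rest of your argument---the sandwiching inequality, the volume estimate $|K^{(+\eta)}\setminus K^{(-\eta)}|\ll_K\eta$ from convexity, and the $\eta$-then-$Q$ diagonal limit---matches the paper's proof essentially line for line (the paper uses homothetic dilates $\tau_\pm K$ rather than parallel bodies, but this is cosmetically different and serves the identical purpose). So the fix is easy: replace each $w_\eta^\pm$ by a finite sum $\sum_i w_{i,\eta}^\pm$ with each summand supported where \eqref{e1.12} holds, and proceed exactly as you wrote.
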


Theorem \ref{t3} is the first time that the Main Conjecture has been established beyond the case of planar curves \cite{hua1}.

\begin{rem}\label{remark3}
Since $\mathbf{a}$ and $q$ are not required to be coprime in \eqref{e1.1}, we actually count rational points with multiplicities. It is, however, a routine application of the M\"{o}bius inversion formula to derive new versions of Theorem \ref{t2}, \ref{t1} and \ref{t3} with the coprime condition, from the current ones. This has been done in the case of planar curves \cite[Theorem 1]{hua1} and the same argument works for the hypersurfaces dealt with in this paper.
\end{rem}

\begin{rem}
It is worth noting that in Theorem \ref{t3} the condition \eqref{e1.12} is not assumed. But, compared with Theorem \ref{t1}, the trade off is the quantitative error term. 
\end{rem}

Our strategy to prove Theorem \ref{t2} is distinctly different from all the previous approaches outlined in the beginning of \S\ref{s1.2}. The  argument starts with delicate harmonic analysis and an elegant duality principle, which is a higher dimensional  analogue of the $B$-process in the van der Corput method for exponential sums. In the case of planar curves, this duality (which corresponds to the usual one dimensional van der Corput $B$-process) has been used in \cite{VV, hua1}. However, Huxley's bound \eqref{e1.5} is crucial in \cite{VV, hua1}, which serves as the input on the dual side. To the best of our knowledge, Huxley's method is only pertinent to the plane geometry and does not admit higher dimensional generalization, which represents the major difficulty in generalizing the method in \cite{hua1} to higher dimensions. The main innovation of this paper is to use a monotonicity argument pertaining to our counting problem to bootstrap the bounds coming from the multidimensional Poisson summation formula and the method of stationary phase. Indeed, we reproduce the bound \eqref{e1.6} (which is established in \cite{BVVZ} by a very different method) by the first application of this process. Further iteration of the argument yields the best possible bound in Theorem \ref{t2}. The details of the proof are presented in \S \ref{s7}.

\begin{rem}Since the van der Corput method is mentioned here, an alert reader may wonder what happens if one applies the method in the usual way (successively applying the $A$- and $B$-processes). This approach has been successfully used to attack other related problems, e.g. the dimension growth conjecture for hypersurfaces with degree at least four \cite{Mar}. However, it turns out that this method is disadvantageous in our problem. Indeed, in the planar case, even assuming the far reaching Exponent Pair Conjecture, one can merely reproduce the bound \eqref{e1.13}, which is certainly worse than \eqref{e1.7} and \eqref{e1.8}.  The details are left to the interested reader. 
\end{rem}

\begin{rem}
Recently, the Main Conjecture has also been established for affine subspaces of $\mathbb{R}^n$ satisfying a certain diophantine condition by Jason Liu and the author \cite{HLiu}.  A variant of the counting problem considered in this section, i.e.  counting the number of rational points with a fixed denominator lying close to a space curve in $\mathbb{R}^3$ with non-vanishing torsion, has been partially solved by the author \cite{hua4}. 
\end{rem}

\section{A diophantine inequality of Robert and Sargos}\label{s2}
In \cite{RS}, Robert and Sargos obtained an essentially sharp upper bound on the number of integral solutions $(m_1,m_2,m_3,m_4)\in [M+1, 2M]^4$ to the diophantine inequality\footnote{Here we have taken the liberty of rewriting their result in our language, as their $\delta$ is actually $M^{-1}$ times our $\delta$.}
$$
\left|m_1^\alpha+m_2^\alpha-m_3^\alpha-m_4^\alpha\right|\le \delta M^{\alpha-1},
$$
where $\alpha\in\R\backslash\{0,1\}$, $M\ge2$ and $\delta>0$.
After de-homogenization, this is of course equivalent to counting rational points close to the surface $\textrm{RS}:  x_1^\alpha+x_2^\alpha-1=y^\alpha$, where $(x_1,x_2)\in [1,2]^2$. Indeed, their result, in our language, states that 
$$
N_\textrm{RS}(M,\delta)\ll_\varepsilon \delta M^{3+\varepsilon}+M^{2+\varepsilon}.
$$

It is readily verified that the surface RS admits nonvanishing Gaussian curvature, therefore our Theorem \ref{t2} implies that for some $c>0$
$$
N_\textrm{RS}(M,\delta)\ll \delta M^{3}+M^{2}\exp(c\sqrt{\log M}),
$$
which is slightly better. Our Theorem \ref{t1} also gives the lower bound  $N_\textrm{RS}(M,\delta)\gg  \delta M^{3}$ when $\delta\gg M^{-1+\varepsilon}$. 

The core of the proof in \cite{RS} relies on a recurrence formula based also on the van der Corput $B$-process and the positivity, which is somewhat reminiscent of our bootstrap argument. However, it is worth noting that they also crucially exploit the symmetry of the underlying manifold, which is absent in our general setting. In fact, their Lemma 1 immediately translates the counting problem above into one about the mean value of the fourth moment of some one dimensional exponential sum with a monomial phase. Then the well known one dimensional van der Corput $B$-process can be readily applied, whereas our exponential sum and its analysis are always genuinely multi-dimensional. Also, because of the one dimensional nature of their proof, the notion of a dual hypersurface does not seem to appear explicitly in \cite{RS}.

\section{Serre's Dimension Growth Conjecture}\label{s3}

For any $n\ge 2$, let $V\subset \mathbb{P}^n$ be an irreducible variety of degree $d\ge1$ defined over $\mathbb{Q}$. Let $x=[\mathbf{x}]\in\mathbb{P}^n(\mathbb{Q})$ be a projective rational point, with $\mathbf{x}\in \mathbb{Z}^{n+1}$ chosen so that $\gcd(x_0,x_1,\ldots,x_n)=1$. Then we define the \emph{height} of $x$ to be
$$H(x):=\max_{0\le i\le n}|x_i|$$
and the counting function associated with $V$ to be
$$N_V(B):=\#\{x\in V(\mathbb{Q}):H(x)\le B\}.$$
All known examples of asymptotic formulae for the counting function $N_V(B)$ take the shape
$$N_V(B)\sim cB^a(\log B)^b,$$
as $B\rightarrow\infty$, for $a,b,c\ge0$ such that $a\in\mathbb{Q}$ and $b\in\frac12\mathbb{Z}$. Also, there is a deep conjecture of Batyrev and Manin \cite[Conjecture C']{BM}, which connects the counting function $N_U(B)$ for a proper Zariski open subset $U$ of a Fano variety $V$ to some geometric invariants of the underlying variety $V$.

For an arbitrary projective variety $V$, we may still want to know whether anything meaningful can be said about the corresponding counting function $N_V(B)$. Nontrivial lower bounds in this generality are impossible since $V$ may not contain any rational points at all. On the other hand, it is easy to see that $N_V(B)\ll_{d,n}B^{\dim V+1}$ where the $\ll$ sign can be reversed when $d=1$. Serre \cite{Ser} conjectured that
\begin{conj}\label{c1}[The Dimension Growth Conjecture]
Let $V\subset\mathbb{P}^n$ be an irreducible projective variety of degree $d\ge2$ defined over $\mathbb{Q}$. Then 
$$N_V(B)\ll_V B^{\dim V}(\log B)^c$$ for some constant $c=c_V>0$.
\end{conj}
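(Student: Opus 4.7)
The strategy is to deduce Conjecture \ref{c1}, at least for a projective hypersurface $V \subset \mathbb{P}^n$ of degree $d \ge 2$ whose smooth affine patches carry nonvanishing Gaussian curvature, directly from Theorem \ref{t2} at the endpoint $\delta=0$. The crucial observation is that a rational point lying \emph{exactly} on $V$ is counted by $N_{\mathcal{S}}(B,0)$ for the corresponding affine patch $\mathcal{S}$; setting $\delta = 0$ kills the main term $\delta Q^n$ in Theorem \ref{t2} and leaves only the error $E_n(B) \ll B^{n-1}(\log B)^\kappa$, which is exactly the shape $B^{\dim V}(\log B)^c$ predicted by Serre, since a hypersurface in $\mathbb{P}^n$ has dimension $n-1$.

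To set this up, I would cover $V$ by its $n+1$ standard affine charts $\{x_i \ne 0\}$, which is harmless as it only costs a factor of $n+1$ in the constant. Working inside $\{x_0 \ne 0\}$ and invoking the implicit function theorem at smooth points, write the affine piece of $V$ locally in Monge form $y_n = f(y_1, \ldots, y_{n-1})$ as in \eqref{e1.9}. Given a primitive integer representative $\mathbf{x} = (x_0, x_1, \ldots, x_n) \in \mathbb{Z}^{n+1}$ with $x_0 > 0$, $H(\mathbf{x}) \le B$, and $[\mathbf{x}] \in V(\mathbb{Q})$, the vanishing of the defining form amounts to $x_0 \, f(x_1/x_0, \ldots, x_{n-1}/x_0) = x_n \in \mathbb{Z}$. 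Setting $q = x_0 \le B$ and $\mathbf{a} = (x_1, \ldots, x_{n-1})$, this says $\mathrm{dist}_\infty(\mathbf{p}/q, \mathcal{S}) = 0$, so the point is counted by $N_{\mathcal{S}}(B, 0)$. Combining these ingredients, Theorem \ref{t2} at $\delta = 0$ yields $N_V(B) \ll_V E_n(B)$, which for $n \ge 4$ is $B^{n-1}(\log B)^\kappa$ and for $n=3$ is the slightly weaker (but still sub-polynomial loss) $B^2 \exp(c\sqrt{\log B})$ — both of the form required by the conjecture.

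The main obstacle is that an arbitrary irreducible hypersurface of degree $d \ge 2$ need not satisfy the uniform Hessian bound \eqref{e1.10}: the Gaussian curvature can vanish along proper subvarieties (such as the lines on a smooth cubic surface), along the generators of cones, or near singular points. These bad loci have to be excised and treated separately. Generically they live in subvarieties $V' \subsetneq V$ of strictly smaller dimension, so the trivial bound $N_{V'}(B) \ll B^{\dim V' + 1}$ already gives a contribution of $O(B^{n-1})$; developable hypersurfaces (cones, cylinders) must be handled by ad hoc means exploiting their ruled structure. Extending to an irreducible variety $V$ of arbitrary codimension — the setting of the full Conjecture \ref{c1} — would additionally require a projection-to-a-hypersurface argument in the style of Heath-Brown to reduce to the setting just described, and this step lies outside the reach of Theorem \ref{t2} itself.
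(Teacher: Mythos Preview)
The paper does not prove Conjecture~\ref{c1}; it is stated as Serre's open conjecture, surveyed for context, and the paper explicitly says ``It remains an interesting question whether Theorem~\ref{t4} can be adapted to give an alternative proof of Conjecture~\ref{c1}.'' So there is no ``paper's own proof'' of this statement to compare against. What the paper \emph{does} prove is Theorem~\ref{t4}, the smooth-manifold analogue under Property~P, and your outline is essentially the proof of Theorem~\ref{t4} specialized to projective hypersurfaces: pass to affine charts, write the piece in Monge form, and apply Theorem~\ref{t2} at $\delta=0$. That part of your write-up is correct and matches \S\ref{s5} closely.

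Where your proposal overreaches is in claiming this settles Conjecture~\ref{c1} even in the restricted setting you describe. First, a small but genuine slip: for $n=3$ the bound $B^{2}\exp(c\sqrt{\log B})$ is \emph{not} of the form $B^{\dim V}(\log B)^{c}$, since $\exp(c\sqrt{\log B})$ dominates every fixed power of $\log B$; the paper's Theorem~\ref{t4} records exactly this weaker bound for $\dim X=2$ and does not claim it meets the conjecture. Second, and more seriously, your handling of the vanishing-curvature locus is too optimistic. You say the bad set lives in a proper subvariety $V'\subsetneq V$ and invoke the trivial bound $N_{V'}(B)\ll B^{\dim V'+1}$, but $\dim V'+1$ can equal $\dim V$, which is only $O(B^{\dim V})$ with no logarithmic saving---fine for the conjecture---\emph{provided} the implied constant is controlled, and provided $V'$ really is lower-dimensional. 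For ruled or developable hypersurfaces (e.g.\ smooth quadric surfaces, which are covered by lines, or cones) the Gaussian curvature vanishes on a set of full dimension, so Theorem~\ref{t2} gives nothing and your ``ad hoc'' appeal is doing all the work. These are precisely the cases where the determinant method of Heath--Brown and its refinements are needed, and they lie outside what Theorem~\ref{t2} can deliver. Your final paragraph acknowledges this honestly; the upshot is that what you have written is a proof of (a special case of) Theorem~\ref{t4}, not of Conjecture~\ref{c1}.
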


We note that the upper bound in Conjecture \ref{c1} is best possible up to powers of $\log B$, in that, when $V$ contains a linear divisor defined over $\mathbb{Q}$, we have $N_V(B)\gg_V B^{\dim V}$. But there exist examples, such as $x_1x_2=x_3x_4$, to show that the logarithmic factors cannot be completely dispensed with in general. 

By a projection argument, Conjecture \ref{c1} can be reduced to the case that $V$ is a hypersurface. A variety of methods have since been developed to attack this conjecture, most notably including the large sieve, geometry of numbers and the $q$-analogue of the van der Corput method. But it was not until Heath-Brown systematically developed the \emph{determinant method} of Bombieri and Pila \cite{BP} in a spectacular paper \cite{HB2} that significant progress could be made towards the full solution of this conjecture. The subsequent development along this line culminates in a series of papers by Browning, Heath-Brown and Salberger (see \cite{Bro2, BHS} for a detailed list of references), in which a slightly weaker form of Conjecture \ref{c1}, that $N_V(B)\ll_{V,\varepsilon} B^{\dim V+\varepsilon}$ for any $\varepsilon>0$, has been completely proved. There is also the \emph{Uniform Dimension Growth Conjecture} in which the implied constant is only allowed to depend on the ambient dimension and the degree of the variety. This conjecture has been solved for varieties of degree four and higher, and remains open for the case of cubic hypersurfaces.  

Here, we would like to address an observation that our Theorem \ref{t2} has a direct corollary that can be naturally regarded as the analogue of the Dimension Growth Conjecture in the context of smooth manifolds. 

Let $X$ be a compact submanifold of $\mathbb{R}^n$ and $(\mathbf{x},\mathbf{f}(\mathbf{x}))$ be a local chart of $X$, where 
$\mathbf{f}=(f_1,\ldots,f_k)$ and $\mathbf{x}=(x_1,\ldots,x_m)$.  We say $X$ satisfies \emph{property P} at $\mathbf{x}_0$ if  there exists a unit vector $\mathbf{u}\in\R^k$ such that
$$
\det\left\{\frac{\partial^2(\mathbf{u}\cdot\mathbf{f})}{\partial x_i\partial x_j}(\mathbf{x}_0)\right\}_{m\times m}\neq 0.
$$ 
$X$ is said to satisfy Property P if it satisfies property P everywhere. 

The analogous counting function of $N_V(B)$ for projective varieties here for the manifold $X$ is 

$$N_X(B):=\#\left\{\frac{\mathbf{a}}q\in X(\mathbb{Q}):\mathbf{a}\in\mathbb{Z}^n, q\le B\right\}.$$

Then by letting $\delta=0$ in Theorem \ref{t2}, it is not hard to deduce the following theorem.

\begin{thm} \label{t4}
Let $X$ be a compact submanifold of $\mathbb{R}^n$ which satisfies property P. Then 
\[
N_{X}(B)\ll
\left\{
\begin{array}{ll}
B\log B,&\text{when }\dim X=1\\B^{2}\exp(c\sqrt{\log B}),&\text{when }\dim X=2\\
 B^{\dim X}(\log B)^\kappa, &\text{when }\dim X\ge 3
\end{array}
\right.
\]
where  $c, \kappa$ and the implicit constant depend only on ${X}$.
\end{thm}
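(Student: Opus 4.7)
The plan is to reduce the problem to counting rational points on an auxiliary hypersurface of nonvanishing Gaussian curvature, and then to invoke Theorem~\ref{t2} with the distance parameter $\delta$ taken arbitrarily small. By compactness I would cover $X$ with finitely many local charts of the form $(\mathbf{x},\mathbf{f}(\mathbf{x}))$, $\mathbf{x}\in\mathcal{D}\subseteq\R^m$, $\mathbf{f}=(f_1,\ldots,f_k)$. On each chart, property P at some base point $\mathbf{x}_0$ furnishes a unit vector $\mathbf{u}\in\R^k$ with $\det\nabla^2(\mathbf{u}\cdot\mathbf{f})(\mathbf{x}_0)\neq 0$, and since this is an open condition in $\mathbf{u}$, I may perturb $\mathbf{u}$ to lie in $\Q^k$ (the unit-length normalization plays no role in the argument). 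After a further shrinkage of $\mathcal{D}$, the scalar $g:=\mathbf{u}\cdot\mathbf{f}$ has nonvanishing Hessian determinant throughout $\mathcal{D}$, so its graph
$$Y:=\{(\mathbf{x},g(\mathbf{x})):\mathbf{x}\in\mathcal{D}\}\subseteq\R^{m+1}$$
is a hypersurface whose Gaussian curvature is bounded away from zero, to which Theorem~\ref{t2} applies. Choosing $\mathbf{u}$ rational rather than only real is the mildly delicate step of the proof and the main obstacle to a careless argument---without rationality, the projected coordinate $g(\mathbf{a}/q)$ would fail to be rational and $Y$ would carry no rational points to count against.

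Next I push rational points of $X$ forward via $(\mathbf{x},\mathbf{f}(\mathbf{x}))\mapsto(\mathbf{x},g(\mathbf{x}))$. Writing $\mathbf{u}=\mathbf{v}/N$ with $\mathbf{v}\in\Z^k$, $N\in\N$, any rational point $\mathbf{a}/q\in X(\Q)$ lying in the chart satisfies $q\mathbf{f}(\mathbf{a}/q)\in\Z^k$, and therefore
$$qN g(\mathbf{a}/q)=\mathbf{v}\cdot\bigl(q\mathbf{f}(\mathbf{a}/q)\bigr)\in\Z.$$
Thus $(\mathbf{a}/q,g(\mathbf{a}/q))$ is an honest rational point on $Y$ of denominator dividing $qN\le NB$, and the assignment is injective because it preserves the first $m$ coordinates. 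Summing the contributions from the finitely many charts and using monotonicity of $N_Y(\cdot,\delta)$ in $\delta$, I would conclude
$$N_X(B)\ll N_Y(NB,\delta)\qquad\text{for every }\delta>0.$$

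Finally I apply Theorem~\ref{t2} to $Y$: for $m=\dim X\ge 2$ it yields
$$N_Y(NB,\delta)\ll\delta(NB)^{m+1}+E_{m+1}(NB).$$
Choosing $\delta=(NB)^{-m-1}$ (equivalently, letting $\delta\downarrow 0$) annihilates the first summand and leaves $E_{m+1}(NB)\ll E_{m+1}(B)$, the constant factor $N$ being absorbed by the $\exp(c\sqrt{\log B})$ or $(\log B)^\kappa$ factor. This is precisely the claimed bound $B^2\exp(c\sqrt{\log B})$ when $m=2$ and $B^m(\log B)^\kappa$ when $m\ge 3$. For $m=1$, Theorem~\ref{t2} does not cover the case $n=2$, so in its place I would invoke Huxley's bound~\eqref{e1.5} on the planar curve $Y$, obtaining $N_Y(NB,\delta)\ll\delta^{1-\varepsilon}(NB)^2+NB\log(NB)$ and hence $N_X(B)\ll B\log B$ on sending $\delta\downarrow 0$. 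Once the rational choice of $\mathbf{u}$ is in place, everything else is a direct consequence of Theorem~\ref{t2} (or Huxley's estimate, in the case of curves) together with monotonicity in $\delta$.
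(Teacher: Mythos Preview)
Your proposal is correct and follows essentially the same route as the paper: reduce locally via a rational linear combination $g=\mathbf{u}\cdot\mathbf{f}$ to a hypersurface $Y\subset\R^{m+1}$ of nonvanishing Gaussian curvature, observe that a rational point on $X$ of height $\le B$ yields one on $Y$ of height $\le NB$, and then apply Theorem~\ref{t2} (for $m\ge 2$) or Huxley's bound~\eqref{e1.5} (for $m=1$) with $\delta\to 0$. The paper likewise perturbs $\mathbf{u}$ to a rational direction $\mathbf{s}/r$ by density and continuity, obtains $N_X(B)\le N_{\mathcal{S}}(Br,0)$, and invokes the same two inputs; the only cosmetic differences are that the paper sets $\delta=0$ outright rather than passing to the limit, and writes the denominator scaling factor as $r$ rather than your $N$.
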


\begin{rem}
We note that property P is much weaker than the usual notion of non-degeneracy. Manifolds contained in a hyperplane are known to be degenerate, but these manifolds may still possess property P. Moreover, in the special case that $X$ is a nonsingular projective variety in $\mathbb{P}^n$ defined by $F(x_0, \ldots, x_n)=0$ for some homogeneous polynomial $F$, we may apply Theorem \ref{t4} to count the number of integral points on $X$ lying in homothetic dilations $B\mathcal{W}$ of a fixed region $\mathcal{W}$ bounded away from the origin. We note that Property P, in this particular case, is equivalent to the statement that the Hessian determinant of $F$ is nonvanishing on $\mathcal{W}$, cf. \cite[\S 1.1.4]{Do} and \cite{Go} for the details.
\end{rem}

The reason this theorem works for manifolds of arbitrary dimension in $\mathbb{R}^n$ is that we can use birational projection to reduce the problem to the case of hypersurfaces, as had been done in the original approach to the Dimension Growth Conjecture. The proof of Theorem \ref{t4} modulo Theorem \ref{t2} is given in \S \ref{s5}.

Comparing Theorem \ref{t4} with Conjecture \ref{c1}, we notice the following interesting features. The manifold $X$ in Theorem \ref{t4} need not be algebraic; even in the case of varieties, $X$ need not be defined over a number field. So in this sense, Theorem \ref{t4} addresses objects more general than does Conjecture \ref{c1}. On the other hand, to achieve this generality in Theorem \ref{t4}, we have to assume property P, which can be compared with the degree assumption in Conjecture \ref{c1}. It remains an interesting question whether Theorem \ref{t4} can be adapted to give an alternative proof of Conjecture \ref{c1}.

\section{Diophantine approximation on manifolds}\label{s4}

Let $\psi:\mathbb{N}\rightarrow\mathbb{R}^+$ be decreasing, which from now on we will call an \emph{approximation function}. We denote
\begin{equation*}
\mathscr{S}_n(\psi):=\{\mathbf{x}\in\mathbb{R}^n:\exists^\infty{q}\in\mathbb{Z}\textrm{ such that } \max_{1\le i\le n}\|{q}{x_i}\|<\psi(|{q}|)\}
\end{equation*}

and
\begin{equation*}
\mathscr{A}_n(\psi):=\{\mathbf{x}\in\mathbb{R}^n:\exists^\infty \mathbf{q}\in\mathbb{Z}^n\textrm{ such that } \|\mathbf{q}\cdot\mathbf{x}\|<\psi(|\mathbf{q}|_\infty)\},
\end{equation*}
where $\|\cdot\|$ measures the distance to the nearest integer and $\exists^\infty$ means ``there exist infinitely many".

The points in the sets $\mathscr{S}_n(\psi)$ and $\mathscr{A}_n(\psi)$ are said to be \emph{simultaneously $\psi$-approximable} and \emph{dually $\psi$-approximable} respectively.
The classical theory of metric diophantine approximation studies the measure theoretic properties of these sets in terms of the convergence of certain volume sums involving the approximation function $\psi$, which is known as the \emph{Khintchine-Groshev theory}. The fundamental principle that underlies the theory is a beautiful \emph{Zero-versus-One Dichotomy}---the sets in question (e.g. $\mathscr{A}_n(\psi)$ and $\mathscr{S}_n(\psi)$) have either zero or full Lebesgue/Hausdorff measure. Modern developments reveal that the problem becomes much deeper when the underlying space of $\mathbf{x}$ is restricted to a proper `curved' submanifold of $\mathbb{R}^n$. The following conjectures, which lie on the central stage, are generally known as the \emph{Generalized Baker-Schmidt Problems}.

\begin{conj}[]\label{c3}
For any approximation function $\psi$, any $s>m-1$ and any submanifold $\mathcal{M}$ of $\mathbb{R}^n$ of dimension $m$ which is non-degenerate everywhere except possibly on a set of zero Hausdorff $s$-measure, one has
\begin{equation*}
\mathcal{H}^s(\mathscr{A}_n(\psi)\cap\mathcal{M})=\left\{
\begin{array}{lll}
0&\textnormal{if}& \displaystyle\sum_{q=1}^{\infty}\left(\frac{\psi(q)}{q}\right)^{s+1-m}q^n<\infty,\\
\mathcal{H}^s(\mathcal{M})&\textnormal{if}&
\displaystyle\sum_{q=1}^{\infty}\left(\frac{\psi(q)}{q}\right)^{s+1-m}q^n=\infty.
\end{array}
\right.
\end{equation*}
\end{conj}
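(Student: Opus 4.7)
The plan is to separate $\mathscr A_n(\psi)\cap\M$ into the contribution from transversal and near-tangent integer hyperplanes, and to tackle each regime with a different tool. For the convergence half, a coarea-style argument suffices: for each $\mathbf{q}\in\Z^n$, the set $\bigl\{\mathbf{x}\in\M:\|\mathbf{q}\cdot\mathbf{x}\|<\psi(|\mathbf{q}|_\infty)\bigr\}$ is a union of at most $O(|\mathbf{q}|)$ slabs on $\M$ of thickness $\psi(|\mathbf{q}|_\infty)/|\mathbf{q}|$ around the level sets $\mathbf{q}\cdot\mathbf{x}=p$. Each such slab has $s$-Hausdorff content $\ll(\psi(|\mathbf{q}|_\infty)/|\mathbf{q}|)^{s+1-m}$ by covering with balls of radius equal to the thickness, yielding a total content $\ll|\mathbf{q}|^{m-s}\psi(|\mathbf{q}|_\infty)^{s+1-m}$ per $\mathbf{q}$. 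Summing over $\mathbf{q}\in\Z^n$ reproduces precisely the convergence sum $\sum_q(\psi(q)/q)^{s+1-m}q^n$ of Conjecture \ref{c3}, and the Hausdorff--Cantelli lemma then delivers $\mathcal H^s(\mathscr A_n(\psi)\cap\M)=0$ on the convergence side.

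For the divergence half one enters the realm of ubiquity. The resonant set of approximations consists of near-tangent integer hyperplanes to $\M$, and via the Gauss map these are in bijection with the rational points on the dual hypersurface $\M^*=\{(\nabla f(\mathbf{x}),\mathbf{x}\cdot\nabla f(\mathbf{x})-f(\mathbf{x}))\}$. Concretely, for $\mathbf{q}=(\mathbf{q}',q_n)\in\Z^n$ the closest point of $\M$ to the hyperplane $\mathbf{q}\cdot\mathbf{y}=p$ is $\mathbf{x}_0=(\nabla f)^{-1}(-\mathbf{q}'/q_n)$, and a direct computation using $f(\mathbf{x}_0)=-(\mathbf{q}'/q_n)\cdot\mathbf{x}_0-f^*(-\mathbf{q}'/q_n)$ shows that $\mathbf{q}\cdot(\mathbf{x}_0,f(\mathbf{x}_0))=-q_nf^*(-\mathbf{q}'/q_n)$. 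Hence the hyperplane lies near $\M$ precisely when $\|q_nf^*(-\mathbf{q}'/q_n)\|<\psi$, which is exactly the object counted asymptotically by $N^w_{\M^*}$ in Theorem \ref{t1}. The two-sided asymptotic of Theorem \ref{t3}, valid for $\de\ge Q^{-1+\e'}$, then furnishes the required locally ubiquitous system of resonant hyperplanes, and the Beresnevich--Velani mass transference principle upgrades the full-measure Lebesgue statement ($s=m$) to the full Hausdorff range $s>m-1$.

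The main obstacle is extending the argument to submanifolds of codimension greater than one, where the Gauss map lands in a Grassmannian rather than on a single hypersurface and Theorem \ref{t3} no longer applies directly. A fibering or projection argument in the spirit of \S\ref{s3}, slicing $\M$ by generic coordinate hyperplanes to reduce to hypersurface slices, is the natural route, but maintaining nondegeneracy uniformly across the fibration requires technical care. A secondary obstacle is that Theorem \ref{t3} demands a rigid Gaussian-curvature hypothesis while the conjecture assumes only nondegeneracy; narrowing this gap calls for a stratification of $\M$ isolating its virtually flat loci (cf.\ Example \ref{ex4}) and an inductive treatment of higher-order tangencies on the flat strata, which is expected to be the most delicate step of the entire program.
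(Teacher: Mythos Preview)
The statement you are attempting to prove is labelled \emph{Conjecture} in the paper, and the paper contains no proof of it. On the contrary, the paper explicitly records that the divergence half of Conjecture~\ref{c3} was settled in full generality by Beresnevich--Dickinson--Velani via ubiquity, while the convergence half ``remained completely open until the author settled this problem for planar curves'' in \cite{hua2}. There is therefore nothing in the paper to compare your proposal against; what follows is an assessment of the proposal on its own merits.

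Your convergence argument has a genuine gap precisely at the near-tangent regime you flagged at the outset but then ignored. The coarea estimate you give, namely that each slab $\{\mathbf{x}\in\M:|\mathbf{q}\cdot\mathbf{x}-p|<\psi\}$ has $s$-content $\ll(\psi/|\mathbf{q}|)^{s+1-m}$, is valid only when $\mathbf{q}$ is uniformly transversal to $\M$: the slab then has width $\asymp\psi/|\mathbf{q}|$ in the direction normal to the level set, and one covers it by $\asymp(\psi/|\mathbf{q}|)^{-(m-1)}$ balls of that radius. But when $\mathbf{q}$ is close to the normal of $\M$ at some point $\mathbf{x}_0$, the function $\mathbf{x}\mapsto\mathbf{q}\cdot\mathbf{x}$ has a critical point there, the slab is a \emph{cap} of width $\asymp(\psi/|\mathbf{q}|)^{1/2}$ (under nonvanishing curvature; wider still at degenerate points), and your content bound fails by a power of $|\mathbf{q}|$. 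Controlling the total contribution of these near-tangent $\mathbf{q}$ is exactly the hard part of the convergence problem: it requires counting integer vectors $\mathbf{q}$ whose direction is close to the Gauss image of $\M$, which is the dual counting problem $N_{\M^*}(Q,\de)$ you invoke only for the divergence half. Your proposal thus reverses the difficulty: the dual counting of Theorems~\ref{t2}--\ref{t3} is needed for \emph{convergence}, not merely divergence, and even with it in hand the convergence case of Conjecture~\ref{c3} for general nondegenerate manifolds remains open.

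On the divergence side your route through the dual hypersurface and Theorem~\ref{t3} is a legitimate alternative to the BDV ubiquity machine, but, as you note yourself, it only reaches hypersurfaces with nonvanishing Gaussian curvature and so recovers strictly less than what \cite{BDV1} already gives.
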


Here $\mathcal{H}^s$ denotes the Hausdorff $s$-measure.

\begin{conj}\label{c5}
For any approximation function $\psi$, any $s>\frac{k}{k+1}m$ and any submanifold $\mathcal{M}$ of $\mathbb{R}^n$ of dimension $m$ which is non-degenerate everywhere except possibly on a set of zero Hausdorff $s$-measure, one has
\begin{equation*}
\mathcal{H}^s(\mathscr{S}_n(\psi)\cap\mathcal{M})=\left\{
\begin{array}{lll}
0&\textnormal{if}& \displaystyle\sum_{q=1}^{\infty}\left(\frac{\psi(q)}{q}\right)^{s+k}q^n<\infty,\\
\mathcal{H}^s(\mathcal{M})&\textnormal{if}&
\displaystyle\sum_{q=1}^{\infty}\left(\frac{\psi(q)}{q}\right)^{s+k}q^n=\infty.
\end{array}
\right.
\end{equation*}
\end{conj}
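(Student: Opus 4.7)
The plan is to split Conjecture \ref{c5} into its convergence and divergence halves and address each half using one side of the main counting results. Since $\mathcal{M}$ here is a hypersurface, $k=1$ and $m=n-1$, so the Hausdorff threshold reads $s > (n-1)/2$ and the volume sum becomes $\sum_q (\psi(q)/q)^{s+1} q^n$. By compactness of $\mathcal{M}$, I may pass to a finite cover by Monge charts satisfying \eqref{e1.10} and work in one such chart.

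For the convergence case I would use a Hausdorff--Cantelli covering argument. Every $\mathbf{x} \in \mathscr{S}_n(\psi) \cap \mathcal{M}$ lies within $\psi(q)/q$ of some $\mathbf{p}/q$ for infinitely many $q$; in particular such $\mathbf{p}/q$ lies within that same distance of $\mathcal{M}$. Dyadically decomposing in $q$ and inserting the upper bound $N_{\mathcal{S}}(2^j,\psi(2^{j-1})) \ll \psi(2^{j-1})\,2^{jn} + E_n(2^j)$ from Theorem \ref{t2}, the resulting $s$-dimensional Hausdorff sum is dominated by
$$
\sum_j 2^{j(n-s)} \psi(2^{j-1})^{s+1} \;+\; \sum_j E_n(2^j)\,\bigl(\psi(2^j)/2^j\bigr)^{s}.
$$
The first sum is comparable, using the monotonicity of $\psi$, to $\sum_q (\psi(q)/q)^{s+1} q^n$, which converges by hypothesis. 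A short computation exploiting $\psi(q)/q \le 1$ together with $s > (n-1)/2$ shows that the error-term sum also converges whenever the first does, yielding $\mathcal{H}^s(\mathscr{S}_n(\psi)\cap\mathcal{M}) = 0$.

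For the divergence case I propose invoking the Beresnevich--Dickinson--Velani ubiquity framework. The task is to verify that the rational points close to $\mathcal{M}$ form a \emph{ubiquitous system}: for every ball $B$ inside our chart and every large $Q$, the union of balls of radius $\rho(Q)$ around the resonant points $\{\mathbf{p}/q : q \le Q,\ \mathrm{dist}_\infty(\mathbf{p}/q,\mathcal{M}) \le 1/q^2\}$ covers a fixed positive proportion of $B$. Theorem \ref{t3} provides precisely the two-sided asymptotic $N_{\mathcal{S}}^K(Q,\delta) \sim \frac{2|K|}{n}\delta Q^n$ for any convex compact $K$ in the range $\delta > Q^{-1+\varepsilon'}$, which both supplies the lower count needed for ubiquity and, in tandem with Theorem \ref{t2} at a slightly finer scale, controls overlaps of the $\rho(Q)$-balls. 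Choosing $\rho(Q) \asymp Q^{-2+\varepsilon'}$ and appealing to the BDV ubiquity/mass-transference theorem then converts the divergence of $\sum_q (\psi(q)/q)^{s+1} q^n$ into $\mathcal{H}^s(\mathscr{S}_n(\psi)\cap\mathcal{M}) = \mathcal{H}^s(\mathcal{M})$.

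The main obstacle is the divergence case, and within it the verification of local ubiquity uniformly over all balls $B$. The admissible range $\delta > Q^{-1+\varepsilon'}$ in Theorem \ref{t3} is borderline for the Baker--Schmidt threshold $s > (n-1)/2$; attaining this threshold tightly will require a careful limit passage in $\varepsilon'$ and attention to uniformity of the implicit constants as $K$ shrinks. A secondary challenge is propagating Hausdorff (rather than merely Lebesgue) measure through the covering argument without incurring logarithmic losses that would constrict the allowed exponent range, which is precisely what the BDV ubiquity-to-Hausdorff passage is designed to handle once the counting input above is in place.
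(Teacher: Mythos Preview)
The statement you are addressing is Conjecture~\ref{c5}, which the paper leaves open; there is no proof of it in the paper. What the paper does establish is Theorem~\ref{t5}, the convergence half of the conjecture in the special case of $C^l$ hypersurfaces with nonvanishing Gaussian curvature. Since your proposal already specialises to hypersurfaces (you set $k=1$ and invoke charts satisfying~\eqref{e1.10}), the relevant comparison is with the proof of Theorem~\ref{t5} in \S\ref{s9}.

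For the convergence half your Hausdorff--Cantelli strategy matches the paper's. The one substantive difference is the handling of the error term $E_n$. The paper does not bound the sum $\sum_j E_n(2^j)(\psi(2^j)/2^j)^s$ directly; instead it first replaces $\psi$ by $\hat\psi(q)=\max(\psi(q),q^{-1+\eta})$ for suitably small $\eta>0$ (equation~\eqref{e9.2}), which is harmless because $\mathscr{S}_n(\psi)\subseteq\mathscr{S}_n(\hat\psi)$ and the volume sum for $\hat\psi$ still converges provided $\eta<(2s-n+1)/(s+1)$. With this lower bound on $\psi$ in force, $E_n(Q)$ is absorbed into $\psi(Q)Q^n$ and only the main sum survives. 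Your ``short computation exploiting $\psi(q)/q\le1$'' is too vague as written: that trivial inequality alone does not make the error sum converge in the range $(n-1)/2<s\le n-1$. A H\"older argument splitting $b_j=a_j^{s/(s+1)}d_j$ and using $s>(n-1)/2$ to sum $d_j^{s+1}$ does work, but it is not the one-liner you suggest; the paper's replacement trick is the cleaner route.

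For the divergence half there is nothing in the paper to compare against: the paper attributes that case (for analytic manifolds) to Beresnevich~\cite{bere} and does not attempt a proof via its own counting results. Your proposal to feed Theorem~\ref{t3} into the BDV ubiquity framework therefore goes beyond the paper, and the obstacles you yourself flag---uniformity of the implicit constants as $K$ shrinks, and the borderline range $\delta>Q^{-1+\varepsilon'}$---are genuine and not resolved by anything proved here.
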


 To demonstrate the strength of these conjectures, we simply observe that  a landmark theorem of Kleinbock and Margulis \cite{KM}, stating that all non-degenerate manifolds are extremal, is a rather special case of these conjectures. More precisely, in the case $s=m$ and $\psi(q)=q^{-\nu}$, the above two conjectures are actually equivalent by Khintchine's transference principle and reduce exactly to the theory of extremal manifolds. 

Regarding the current state of play of Conjecture \ref{c3}, the divergence case has now been established completely by Beresnevich, Dickinson and Velani \cite{BDV1} using their ubiquity framework, improving on earlier work of Dickinson and Dodson \cite{DD}. However, the convergence case remained completely open until the author settled this problem for planar curves recently \cite{hua2}. 

On the other hand, Conjecture \ref{c5}, corresponding to the \emph{simultaneous approximation on manifolds}, is widely believed to be more difficult, as it is closely related to the counting problem in Section \ref{s1}. The divergence case in Conjecture \ref{c5}, assuming the analyticity of the manifold, is now a theorem of Beresnevich \cite[Theorem 2.5]{bere}, but the convergence case in general is wide open except for the case of planar curves \cite{VV}.
In fact, the upper bound \eqref{e1.3}, if true, would immediately imply the convergence case of Conjecture \ref{c5}. As a consequence of Theorem \ref{t2}, we are able to establish this for hypersurfaces. This represents the first complete Hausdorff theory for a general class of manifolds for Conjecture \ref{c5}, beyond the case of planar curves.

\begin{thm}\label{t5}
Let $n\ge 3$ be an integer. For any approximation function $\psi$, any $s>\frac{n-1}{2}$ and any $C^l$ hypersurface $\mathcal{S}\subseteq\R^n$ with nonvanishing Gaussian curvature everywhere except possibly on a set of zero Hausdorff $s$-measure, we have
$$
\mathcal{H}^s(\mathscr{S}_n(\psi)\cap\mathcal{S})=0\quad\textnormal{if}\quad \displaystyle\sum_{q=1}^{\infty}\left(\frac{\psi(q)}{q}\right)^{s+1}q^n<\infty.
$$ 
\end{thm}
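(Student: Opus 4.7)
The plan is to deduce Theorem \ref{t5} from the counting bound of Theorem \ref{t2} via the Hausdorff--Cantelli lemma. As a preliminary reduction, cover $\mathcal{S}$ by countably many compact Monge charts: on the exceptional locus where the Gaussian curvature vanishes the Hausdorff $s$-measure is zero by hypothesis, while on each remaining compact piece the curvature is bounded away from zero and Theorem \ref{t2} is applicable. By countable additivity of $\mathcal{H}^s$, it suffices to treat a single compact hypersurface $\mathcal{S}$ satisfying \eqref{e1.10}.

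Express $\mathscr{S}_n(\psi) \cap \mathcal{S} = \limsup_{q \to \infty} A_q$, where
$$
A_q := \bigcup_{\mathbf{a} \in \mathbb{Z}^n} \bigl\{ \mathbf{x} \in \mathcal{S} : \max_{1 \le i \le n} |x_i - a_i/q| < \psi(q)/q \bigr\}.
$$
Each set in the union has diameter $O(\psi(q)/q)$ and hence Hausdorff $s$-content $\ll (\psi(q)/q)^s$, while only rational points $\mathbf{a}/q$ within distance $\psi(q)/q$ of $\mathcal{S}$ contribute to $A_q$. Grouping $q$ into dyadic blocks $q \in (2^{t-1}, 2^t]$ and bounding the total number of contributing rational points by $N_{\mathcal{S}}(2^t, \psi(2^{t-1}))$ via Theorem \ref{t2}, the subadditivity of Hausdorff content gives
$$
\sum_{q \in (2^{t-1},\, 2^t]} \mathcal{H}^s_\infty(A_q) \ll \Bigl(\frac{\psi(2^{t-1})}{2^{t-1}}\Bigr)^{\!s} \bigl( \psi(2^{t-1})\, 2^{tn} + E_n(2^t) \bigr).
$$
By the Hausdorff--Cantelli lemma, it remains to show that summing over $t$ yields a finite total.

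The main-term contribution, summed over $t$, is comparable to $\sum_q (\psi(q)/q)^{s+1} q^n$ and is finite by hypothesis. The main obstacle is the error contribution $\sum_t \psi(2^t)^s\, 2^{-ts}\, E_n(2^t)$, and it is here that the condition $s > (n-1)/2$ enters decisively. Convergence of the main series forces the uniform bound $\psi(2^t)^{s+1}\, 2^{t(n-s)} \ll 1$, whence $\psi(2^t) \ll 2^{-t(n-s)/(s+1)}$. Substituting this estimate, the exponent of $2^t$ in the error contribution simplifies to $(n-1-2s)/(s+1)$, which is strictly negative precisely when $s > (n-1)/2$. The resulting geometric decay then dominates both the polylogarithmic factor $(\log 2^t)^{\kappa}$ (present when $n \ge 4$) and the subexponential factor $\exp(c\sqrt{\log 2^t})$ (when $n = 3$) coming from $E_n$, so the error series converges. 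The Hausdorff--Cantelli lemma then delivers $\mathcal{H}^s(\mathscr{S}_n(\psi) \cap \mathcal{S}) = 0$, as required.
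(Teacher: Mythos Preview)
Your proof is correct and follows essentially the same route as the paper: reduce to compact Monge charts with curvature bounded away from zero, apply Theorem~\ref{t2} on dyadic blocks, and conclude via the Hausdorff--Cantelli lemma. The only notable difference is in the treatment of the error term $E_n$: the paper replaces $\psi$ by $\hat\psi(q)=\max(\psi(q),q^{-1+\eta})$ with $\eta<(2s-n+1)/(s+1)$, so that the main term $\psi(2^i)2^{ni}$ always dominates $E_n(2^i)$ and only one series needs to be checked; you instead keep $\psi$ as given and bound the error contribution directly using $\psi(2^t)\ll 2^{-t(n-s)/(s+1)}$, which is forced by convergence of the main series. Both devices exploit the hypothesis $s>(n-1)/2$ in the same way (via the exponent $(n-1-2s)/(s+1)<0$), and each yields the desired convergence.
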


A proof of Theorem \ref{t5} modulo Theorem \ref{t2} will be given in Section \ref{s9}. 

\begin{rem}

Recently, J. Liu and the author  \cite{HLiu} have established a stronger form of Conjecture \ref{c5} (with wider range of $s$) for affine subspaces of $\mathbb{R}^n$, subject to a natural diophantine condition on the matrix defining the affine subspace. 
\end{rem}

\section{The proof of Theorem \ref{t2}}\label{s7}
We recall that $w\in C_0^\infty(\R^{n-1})$ is a weight function  with $\mathrm{supp}(w)\subseteq\mathcal{D}$ and $N_{\su}^w(Q,\delta)$ is defined in \eqref{e1.11}. To prove Theorem \ref{t2}, it suffices to prove under the assumptions \eqref{e1.10} and \eqref{e1.12} that
\begin{equation}\label{e2.19}
N_{\su}^w(Q,\delta)\ll \delta Q^n+E_n(Q),
\end{equation}
with $E_n(Q)$ given in \eqref{e1.14} and \eqref{e1.15}. 
The general case will follow by an application of partitions of unity and the compactness of $\mathcal{S}$. 

Recall that
$$N_{\su}^w(Q,\delta)=\sum_{\substack{\mathbf{a}\in\Z^{n-1}\\q\le Q\\ \|qf(\mathbf{a}/q)\|<\delta}}w\left(\frac{\mathbf{a}}{q}\right),
$$
where $\|\cdot\|$ means the distance to the nearest integer. 

Let $\rchi_{\delta}(\theta)$ be the characteristic function of the set $\{\theta\in\R:\|\theta\|\le\delta\}$
and 
$$J=\left\lfloor\frac1{2\delta}\right\rfloor.$$
Then consider the Fej\'er kernel
$$\mathcal{F}_J(\theta)=J^{-2}\left|\sum_{j=1}^Je(j\theta)\right|^2=\left(\frac{\sin(\pi J \theta)}{J\sin(\pi\theta)}\right)^2=\sum_{j=-J}^J\frac{J-|j|}{J^2}e(j\theta),$$
where as usual $e(x):=e^{2\pi i x}$.
An easy computation shows that when $\|\theta\|\le\de$
$$\left(\frac{\sin(\pi J \theta)}{J\sin(\pi\theta)}\right)^2\ge\left(\frac{2\pi^{-1}\pi J \|\theta\|}{J\pi\|\theta\|}\right)^2=\frac4{\pi^2},$$
that is,
$$\rchi_\delta(\theta)\le \frac{\pi^2}4\mathcal{F}_J(\theta).$$

Next we insert the Fej\'er kernel and obtain
\begin{equation} \label{e2.1}
N_{\su}^w(Q,\delta)\le\frac{\pi^2}4\sum_{\substack{\mathbf{a}\in \Z^{n-1}\\q\le Q}}\sum_{j=-J}^J\frac{J-|j|}{J^2}w\left(\frac{\mathbf{a}}q\right)e\left(jqf\left(\frac{\mathbf{a}}{q}\right)\right).
\end{equation}
The term, with $j=0$ on the right of \eqref{e2.1}, contributes
\begin{equation}\label{e2.2}
\le C_0 \delta Q^n
\end{equation}
where $C_0$ only depends on the support and sup norm of $w$.

Now, without loss of generality, we will only need to focus on the sum over the range $0<j\le J$ in \eqref{e2.1} since the contribution from the other part $-J\le j<0$ is nothing but the complex conjugate of that from $0<j\le J$. Hence, we obtain, by the $n-1$ dimensional Poisson summation formula over $\mathbf{a}\in\Z^{n-1}$, that
\begin{align*}
&\sum_{\mathbf{a}\in \Z^{n-1}}w\left(\frac{\mathbf{a}}q\right)e\left(jqf\left(\frac{\mathbf{a}}{q}\right)\right)\\
=&
\sum_{\mathbf{k}\in\Z^{n-1}}\int_{\R^{n-1}}w(\mathbf{t}/q)e(jqf(\mathbf{t}/q)
-\mathbf{k}\cdot\mathbf{t})d\mathbf{t},
\end{align*}
which by the change of variable $\mathbf{t}=q\mathbf{x}$ is
\begin{equation*}
q^{n-1}\sum_{\mathbf{k}\in\Z^{n-1}}I(j,\mathbf{k};q)
\end{equation*}
where
\begin{equation*}
I(j,\mathbf{k};q):=\int_{\R^{n-1}}w(\mathbf{x})e(q(jf(\mathbf{x})-\mathbf{k}\cdot\mathbf{x}))d\mathbf{x}.
\end{equation*}
Thus it follows from \eqref{e2.1} that
\begin{equation}\label{e2.5}
N_{\su}^w(Q,\delta)\le C_0\delta Q^n+2\left|\sum_{q\le Q}q^{n-1}\sum_{j=1}^J\frac{J-j}{J^2}\sum_{\mathbf{k}\in\Z^{n-1}}I(j,\mathbf{k};q)\right|.
\end{equation}

The study of oscillatory integrals such as $I(j,\mathbf{k};q)$ represents a central topic in classical harmonic analysis and therefore there is an extremely rich literature on this subject (see Stein's monograph \cite[Chapter VIII]{st} for the relevant background). An essentially sharp pointwise upper bound for $I(j,\mathbf{k};q)$ is a direct consequence of the method of stationary phase. Unfortunately, this bound is insufficient for our purpose. We have to also exploit the cancellation that occurs in the new exponential sum resulting from the stationary phase. 

Let
$$U:=\{\mathbf{x}\in\R^{n-1}|w(\mathbf{x})\not=0\},\quad V:=\nabla f(U)$$
and
$$\mathcal{R}:=\nabla f(\mathcal{D}), \quad \rho:=\frac12\text{dist}(\partial\mathcal{R},\partial V), $$
where $\text{dist}(\cdot,\cdot)$ stands for the Euclidean distance on $\mathbb{R}^{n-1}$. 
Note that $U$ and $V$ are open sets with $\overline{U}\subseteq \mathcal{D}$ and $\overline{V}\subseteq \mathcal{R}$ and that $\rho>0$.

We first recall the concept of a \emph{dual hypersurface} $\mathcal{S}^*$ of $\mathcal{S}$. Let $f^*: \mathcal{R}\to\R$ be the function given by

\begin{equation}\label{e2.6}
f^*(\mathbf{y})=\mathbf{y}\cdot(\nabla f)^{-1}(\mathbf{y})-(f\circ(\nabla f)^{-1})(\mathbf{y}).
\end{equation}
Note that
\begin{align}
&\nabla f^*\nonumber\\=& (\nabla \mathbf{y})\cdot (\nabla f)^{-1}+\mathbf{y}\cdot\nabla(\nabla f)^{-1}\nonumber-(\nabla f\circ (\nabla f)^{-1})\cdot\nabla(\nabla f)^{-1}\\
=&(\nabla f)^{-1}\label{e2.20}
\end{align}
and that if $\mathbf{y}=\nabla f(\mathbf{x})$ in \eqref{e2.6} then
\begin{equation}\label{e2.4}
f^*(\mathbf{y})=\mathbf{x}\cdot \mathbf{y}-f(\mathbf{x}).
\end{equation}
From \eqref{e2.4}, it is immediately clear that $f^{**}=f$, which explains why $f^*$ is called the dual of $f$. Also when $f\in C^l(\mathcal{D})$, we know $f^*\in C^{l}(\mathcal{R})$.

Moreover, the Hessians of $f$ and $f^*$ satisfy the relation
$$
\nabla^2f^*(\mathbf{y})=(\nabla^2f(\mathbf{x}))^{-1}, \quad\text{where }\mathbf{y}=\nabla f(\mathbf{x}), 
$$
which, by \eqref{e1.10}, then implies
\begin{equation}\label{e2.21}
c_2^{-1}\le |\det\nabla^2f^*(\mathbf{y})|\le c_1^{-1}
\end{equation}
for all $\mathbf{y}\in \mathcal{R}$.

Now we are ready to evaluate the oscillatory integral $I(j,\mathbf{k};q)$. This can be effectively done via the Method of Stationary Phase (See L. H\"ormander \cite[Theorem 7.7.5]{ho} or E. Stein \cite[Chap. VIII, Prop. 6]{st}.

\begin{lem}[Non-stationary phase]\label{l1}
Let $u\in C_0^\infty(\R^d)$ and $\phi\in C^{l}(\mathbb{R}^d)$ with $\nabla \phi(\mathbf{x})\not=\mathbf{0}$ for $\mathbf{x}\in\mathrm{supp}(u)$. Then for any $\lambda>0$
$$\left|\int u(\mathbf{x})e(\lambda\phi(\mathbf{x}))d\mathbf{x}\right|\le C_l\lambda^{-l+1},$$
and furthermore $C_l$ depends only on upper bounds for finitely many derivatives of $u$ and $\phi$ and a lower bound for $|\nabla\phi|$ in the support of $u$.
\end{lem}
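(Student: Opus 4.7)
The plan is to prove the lemma via the standard non-stationary phase integration by parts trick, which transfers all of the oscillation in $e(\lambda\phi)$ onto the amplitude $u$ at the cost of one factor of $\lambda^{-1}$ per iteration. Since $\nabla\phi$ is assumed to be nonvanishing on $\mathrm{supp}(u)$, the first step is to introduce the first-order differential operator
$$L := \frac{1}{2\pi i\lambda\,|\nabla\phi|^{2}}\,\nabla\phi\cdot\nabla,$$
engineered precisely so that $L\bigl(e(\lambda\phi)\bigr) = e(\lambda\phi)$. Its formal adjoint is
$$L^{t}v \;=\; -\frac{1}{2\pi i\lambda}\,\nabla\cdot\!\left(\frac{\nabla\phi}{|\nabla\phi|^{2}}\,v\right),$$
which carries an explicit factor of $\lambda^{-1}$, raises the order of differentiation of $\phi$ by one, and raises the order of differentiation of the test function $v$ by one. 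Because $u$ is compactly supported, integration by parts produces no boundary contributions, and we obtain
$$\int u(\mathbf{x})\,e(\lambda\phi(\mathbf{x}))\,d\mathbf{x} \;=\; \int u\cdot L\bigl(e(\lambda\phi)\bigr)\,d\mathbf{x} \;=\; \int L^{t}(u)\cdot e(\lambda\phi)\,d\mathbf{x}.$$

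The next step is to iterate this identity $l-1$ times, yielding
$$\int u(\mathbf{x})\,e(\lambda\phi(\mathbf{x}))\,d\mathbf{x} \;=\; \int (L^{t})^{l-1}(u)\cdot e(\lambda\phi)\,d\mathbf{x}.$$
A direct bookkeeping argument based on repeated application of the product rule shows that $(L^{t})^{l-1}(u) = \lambda^{-(l-1)}\Phi(\mathbf{x})$ for some $\lambda$-independent function $\Phi$ supported inside $\mathrm{supp}(u)$, expressible as a finite sum of products of partial derivatives of $u$ of orders at most $l-1$, partial derivatives of $\phi$ of orders at most $l$, and negative powers of $|\nabla\phi|^{2}$. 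Taking absolute values then gives
$$\left|\int u\,e(\lambda\phi)\,d\mathbf{x}\right| \;\le\; \lambda^{-(l-1)}\int_{\mathrm{supp}(u)}|\Phi(\mathbf{x})|\,d\mathbf{x} \;\le\; C_{l}\,\lambda^{-l+1},$$
which is exactly the asserted bound. By construction, $C_{l}$ depends only on the volume of $\mathrm{supp}(u)$, upper bounds for derivatives of $u$ up to order $l-1$, upper bounds for derivatives of $\phi$ up to order $l$ on $\mathrm{supp}(u)$, and the lower bound $\min_{\mathrm{supp}(u)}|\nabla\phi|$, exactly as claimed. There is no real obstacle in the proof; the only points worth checking carefully are that each application of $L^{t}$ genuinely extracts one full power of $\lambda^{-1}$ and preserves the support condition, both of which follow immediately from the explicit formula for $L^{t}$.
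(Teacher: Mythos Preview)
Your proof is correct and is precisely the ``repeated integration by parts'' argument the paper points to (citing H\"ormander, Theorem 7.7.1): the operator $L$ with $L(e(\lambda\phi))=e(\lambda\phi)$, its adjoint $L^{t}$, and $l-1$ iterations exhausting the available $C^{l}$ regularity of $\phi$. There is nothing to add; the bookkeeping on the orders of derivatives and the dependence of $C_l$ is exactly as you state.
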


This is a simplification of Theorem 7.7.1 in \cite{ho} and can be proved simply by repeated integration by parts.

\begin{lem}[Stationary phase]\label{l2} Let $u\in C_0^\infty(\R^d)$ and $\phi\in C^{l}(\mathbb{R}^d)$ for some integer $l>\frac{d}2+4$. Suppose $\nabla\phi(\mathbf{x}_0)=\mathbf{0}$ and $\det\nabla^2\phi(\mathbf{x}_0)\not=0$. Let $\sigma$ be the signature of $\nabla^2\phi(\mathbf{x}_0)$ and $\Delta=|\det\nabla^2\phi(\mathbf{x}_0)|$. Suppose further $\nabla\phi(\mathbf{x})\not=\mathbf{0}$ for $\mathbf{x}\in\mathrm{supp}(u)\backslash\{\mathbf{x}_0\}$. Then for $\lambda>0$ we have
$$
\int u(\mathbf{x}) e(\lambda\phi(\mathbf{x}))d\mathbf{x}=e(\lambda\phi(\mathbf{x}_0)+\sigma/8)\Delta^{-\frac12}\lambda^{-\frac{d}2}\left(u(\mathbf{x}_0)+O(\lambda^{-1})\right)$$
where the implicit constant depends only on upper bounds for finitely many derivatives of $\phi$ and $u$ in the support of $u$ and a lower bound for $\Delta$.
\end{lem}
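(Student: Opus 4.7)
The plan is to follow the classical stationary phase argument in three stages: first localize near $\mathbf{x}_0$, then transform $\phi$ into a pure quadratic form via Morse's lemma, and finally evaluate the resulting Fresnel-type integral. Fix a small $\varepsilon>0$ and choose a cutoff $\chi\in C_0^\infty(\R^d)$ with $\chi\equiv 1$ on $B_{\varepsilon/2}(\mathbf{x}_0)$ and $\mathrm{supp}(\chi)\subseteq B_\varepsilon(\mathbf{x}_0)$. Split the integral into the pieces weighted by $\chi$ and $1-\chi$. On the support of $u(1-\chi)$ the gradient $\nabla\phi$ is bounded away from zero by compactness together with the hypothesis that $\mathbf{x}_0$ is the only critical point in $\mathrm{supp}(u)$, so Lemma \ref{l1} produces an acceptable contribution of size $O(\lambda^{-l+1})=O(\lambda^{-d/2-3})$, absorbed into the claimed error $O(\lambda^{-d/2-1})$.

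For the localized piece, Taylor's theorem with integral remainder gives
\[
\phi(\mathbf{x})-\phi(\mathbf{x}_0)=\tfrac12\langle B(\mathbf{x})(\mathbf{x}-\mathbf{x}_0),\mathbf{x}-\mathbf{x}_0\rangle,\quad B(\mathbf{x})=2\int_0^1(1-t)\nabla^2\phi(\mathbf{x}_0+t(\mathbf{x}-\mathbf{x}_0))\,dt,
\]
with $B(\mathbf{x}_0)=A:=\nabla^2\phi(\mathbf{x}_0)$. Since $A$ is non-singular, an implicit-function argument (or the classical Morse lemma) furnishes a $C^{l-2}$ diffeomorphism $\mathbf{y}=\Psi(\mathbf{x})$ from a neighborhood of $\mathbf{x}_0$ onto a neighborhood of $\mathbf{0}$, with $\Psi(\mathbf{x}_0)=\mathbf{0}$ and $D\Psi(\mathbf{x}_0)=I$, satisfying $\phi(\mathbf{x})=\phi(\mathbf{x}_0)+\tfrac12\langle A\mathbf{y},\mathbf{y}\rangle$. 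Changing variables reduces the localized integral to
\[
e(\lambda\phi(\mathbf{x}_0))\int_{\R^d}g(\mathbf{y})\,e\!\left(\tfrac{\lambda}{2}\langle A\mathbf{y},\mathbf{y}\rangle\right)d\mathbf{y},
\]
where $g\in C_0^{l-2}(\R^d)$ and $g(\mathbf{0})=u(\mathbf{x}_0)$.

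Next diagonalize $A=O^{T}DO$ with $O$ orthogonal and $D=\mathrm{diag}(\mu_1,\ldots,\mu_d)$, and apply the orthogonal substitution $\mathbf{y}\mapsto O^{T}\mathbf{y}$; the problem decouples into a product of one-dimensional Fresnel kernels. Plancherel together with the Fourier identity
\[
\int_\R e\!\left(\tfrac{\alpha z^{2}}{2}\right)e(-\xi z)\,dz=|\alpha|^{-1/2}e(\mathrm{sgn}(\alpha)/8)\,e\!\left(-\tfrac{\xi^{2}}{2\alpha}\right),\quad\alpha\neq 0,
\]
(justified by analytic continuation from $\alpha+i\varepsilon$ or by contour rotation) yields
\[
\int \tilde g(\mathbf{z})\,e\!\left(\tfrac{\lambda}{2}\langle D\mathbf{z},\mathbf{z}\rangle\right)d\mathbf{z}=\lambda^{-d/2}\Delta^{-1/2}e(\sigma/8)\int \widehat{\tilde g}(\xi)\,e\!\left(-\tfrac{1}{2\lambda}\langle D^{-1}\xi,\xi\rangle\right)d\xi,
\]
since $\lambda>0$ implies $\mathrm{sgn}(\lambda\mu_j)=\mathrm{sgn}(\mu_j)$ and $\sum_j\mathrm{sgn}(\mu_j)=\sigma$. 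Expanding $e(-\frac{1}{2\lambda}\langle D^{-1}\xi,\xi\rangle)=1+O(\lambda^{-1}|\xi|^{2})$ gives the advertised main term $u(\mathbf{x}_0)\lambda^{-d/2}\Delta^{-1/2}e(\sigma/8)$ plus an error of size $\lambda^{-d/2-1}\|(1+|\xi|^{2})\widehat{\tilde g}\|_{L^{1}}$. The hypothesis $l>d/2+4$ (two derivatives absorbed by Morse, roughly $\lfloor d/2\rfloor+1$ by the Sobolev embedding needed for the $L^{1}$ control of $\widehat{\tilde g}$ against $|\xi|^{2}$, and two more for the Taylor remainder) is precisely what makes this bound effective with constants depending only on the stated quantities.

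The main obstacle is the quadratic-phase Fourier identity used in the last step: because $e(\alpha z^{2}/2)$ is only conditionally convergent, its Fourier transform must be obtained by contour deformation or by a convergence factor $e^{-\varepsilon z^{2}}$ with $\varepsilon\to 0^{+}$, and it is the product of the resulting Fresnel phases $e(\mathrm{sgn}(\mu_j)/8)$ across the $d$ eigenvalue directions that assembles into the Maslov factor $e(\sigma/8)$. The book-keeping of the derivative count through Morse's lemma and the Sobolev embedding is what pins down the explicit regularity threshold $l>d/2+4$; any looser tracking produces a worse threshold.
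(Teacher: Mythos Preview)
The paper does not prove this lemma at all: immediately after the statement it simply records that ``Lemma~\ref{l2} is essentially \cite[Chap.~VIII, Prop.~6]{st} except that we assume less differentiability on $\phi$ as remarked in \cite[p.~222]{ho}.'' So there is nothing to compare against beyond the standard textbook argument.

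Your sketch is that standard argument---localize, apply Morse's lemma, evaluate the resulting Fresnel integral via the Fourier transform of the Gaussian chirp---and it is correct in outline. A couple of minor points worth tightening if you flesh this out: the normalization $D\Psi(\mathbf{x}_0)=I$ together with $\phi=\phi(\mathbf{x}_0)+\tfrac12\langle A\mathbf{y},\mathbf{y}\rangle$ is a specific choice of Morse chart (not the usual $\pm y_j^2$ form), and you should say one word about why such a chart exists with that Jacobian normalization so that $g(\mathbf{0})=u(\mathbf{x}_0)$ really holds without an extra Jacobian factor. Also, the derivative bookkeeping you describe is the right idea but is heuristic; the precise count (and hence the threshold $l>\tfrac{d}{2}+4$) is exactly what H\"ormander's remark on p.~222 pins down, so in a formal write-up you would either reproduce that or cite it.
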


Lemma \ref{l2} essentially follows from \cite[Chap. VIII, Prop. 6]{st}\footnote{See also Tao's online notes at \url{https://www.math.ucla.edu/~tao/247b.1.07w/notes8.pdf} and in particular the remarks in the last paragraph on page 11.} except that we assume less differentiability on $\phi$ as remarked in \cite[p. 222]{ho}.

For fixed $j$, we split the set of $\mathbf{k}\in\Z^{n-1}$ into three disjoint subsets, each of which will be dealt with differently. Let
$$
\mathscr{K}_1=\{\mathbf{k}\in\Z^{n-1}\mid \mathbf{k}/j\in V\},
$$
$$
\mathscr{K}_2=\{\mathbf{k}\in\Z^{n-1}\mid \text{dist}(\mathbf{k}/j,V)\ge\rho\}
$$
and
$$
\mathscr{K}_3=\Z^{n-1}\backslash\mathscr{K}_1\cup\mathscr{K}_2.
$$
We denote by $N_i$ the contribution from those $\mathbf{k}\in\mathscr{K}_i$ to the sum in \eqref{e2.5}, for $i=1, 2, 3$. 

For $\mathbf{k}\in\mathscr{K}_2$, let  
$$\phi_1(\mathbf{x})=\frac{jf(\mathbf{x})-\mathbf{k}\cdot\mathbf{x}}{\text{dist}(\mathbf{k},jV)},\quad \lambda_1=q \cdot\text{dist}(\mathbf{k},jV).$$
Note that 
$$
|\nabla \phi_1(\mathbf{x})|=\frac{|j\nabla f(\mathbf{x})-\mathbf{k}|}{\text{dist}(\mathbf{k},jV)}\ge1
$$
since $\nabla f(\mathbf{x})\in V$ when $\mathbf{x}\in U$.

 We also need the fact that $\phi_1$ has uniformly bounded $C^l$ norm for all $j$ and $\mathbf{k}\in\mathscr{K}_2$. To see this, it suffices to verify that $\phi_1$ is uniformly bounded, since it is transparent that higher derivatives of $\phi_1$ are all uniformly bounded in view of the definition of $\mathscr{K}_2$. We further split $\mathscr{K}_2$ into two subsets.

If $|\mathbf{k}|\ge C_2 j$ for some constant $C_2$ so large that $\mathrm{dist}(C_2^{-1}V,\mathbf{0})<1/2$, then 
$$\mathrm{dist}\left(\frac{\mathbf{k}}{|\mathbf{k}|},\frac{j}{|\mathbf{k}|} V\right)\ge \mathrm{dist}(\mathbf{k}/|\mathbf{k}|,\mathbf{0})-\mathrm{dist}\left(\mathbf{0}, \frac{j}{|\mathbf{k}|}V\right)\ge\frac12$$ 
and 
$$\left|\frac{j}{|\mathbf{k}|}f(\mathbf{x})-\frac{\mathbf{k}}{|\mathbf{k}|}\cdot\mathbf{x}\right|\ll 1.$$ So clearly $|\phi_1|$ is uniformly bounded for all such $j$ and $\mathbf{k}$. On the other hand,
if $|\mathbf{k}|\le C_2j$, then 
$$
\frac{jf(\mathbf{x})-\mathbf{k}\cdot\mathbf{x}}{\text{dist}(\mathbf{k},jV)}\ll \frac{j+|\mathbf{k}|}{ j}\ll 1.
$$
In any case, 
$|\phi_1|$ is uniformly bounded for all $j$ and $\mathbf{k}\in\mathscr{K}_2$.

Then we apply Lemma \ref{l1} and obtain that
$$
I(j,\mathbf{k};q)=\int_{\R^{n-1}}w(\mathbf{x})e(\lambda_1\phi_1(\mathbf{x}))d\mathbf{x}\ll \lambda_1^{-l+1},
$$
when $\mathbf{k}\in \mathscr{K}_2$. This immediately implies that
\begin{equation}\label{e2.17}
\sum_{\mathbf{k}\in\mathscr{K}_2}I(j,\mathbf{k};q)\ll q^{-l+1}
\end{equation}
on noting that $l\ge n+1$ from \eqref{e1.16}.
Therefore,
\begin{equation}\label{e2.15}
N_2\ll \log Q.
\end{equation}

To treat the remaining cases, let $$\phi(\mathbf{x})=f(\mathbf{x})-\frac{\mathbf{k}}j\cdot\mathbf{x},\quad \lambda=q j.$$
We observe that for fixed $j$, each $\mathbf{k}\in j\mathcal{R}$ determines a unique \emph{critical point} $\mathbf{x}_{j,\mathbf{k}}$ such that 
$$\nabla\phi(\mathbf{x}_{j,\mathbf{k}})=\nabla f(\mathbf{x}_{j,\mathbf{k}})-j^{-1}\mathbf{k}=\mathbf{0}.$$
Clearly
$$
\mathbf{x}_{j,\mathbf{k}}=(\nabla f)^{-1}(\mathbf{k}/j)\in \mathcal{D}.
$$
Also note that by \eqref{e2.4}
$$
\phi(\mathbf{x}_{j,\mathbf{k}})=f(\mathbf{x}_{j,\mathbf{k}})-\frac{\mathbf{k}}j\cdot\mathbf{x}_{j,\mathbf{k}}=-f^*\left(\frac{\mathbf{k}}j\right).
$$
Moreover, as eigenvalues depend continuously on the matrix, the condition \eqref{e1.10} prevents eigenvalues of $ \nabla^2f(\mathbf{x})$ from changing signs for all $\mathbf{x}\in\mathcal{D}$, i.e. 
 the signature $\sigma$ remains the same for all $\nabla^2f(\mathbf{x}_{j,\mathbf{k}})$. Therefore we may apply Lemma \ref{l2} with $u(\mathbf{x})=w(\mathbf{x})$, $d=n-1$, and obtain that
\begin{equation}\label{e2.23}
I(j,\mathbf{k};q)=\frac{w(\mathbf{x}_{j,\mathbf{k}})}{\sqrt{|\det\nabla^2f(\mathbf{x}_{j,\mathbf{k}})|}}(qj)^{-\frac{n-1}2}e\left(-qjf^*\left(\frac{\mathbf{k}}{j}\right)+\frac{\sigma}8\right)+O\left((qj)^{-\frac{n+1}2}\right).
\end{equation}

Since $\mathscr{K}_3$ lies in a ball of diameter $\asymp j$, we have
$$
|\mathscr{K}_3|\ll j^{n-1}
$$
with the implied constant only depending on  $\mathcal{R}$. We also observe that for $\mathbf{k}\in\mathscr{K}_3$, $\mathbf{x}_{j,\mathbf{k}}\not\in U$, namely $w(\mathbf{x}_{j,\mathbf{k}})=0$. Thus we obtain from \eqref{e2.23} that
\begin{equation}\label{e2.18}
\sum_{\mathbf{k}\in\mathscr{K}_3}I(j,\mathbf{k};q)\ll|\mathscr{K}_3|(qj)^{-\frac{n+1}2}\ll q^{-\frac{n+1}2}j^{\frac{n-3}2}.
\end{equation}
Hence
\begin{equation}\label{e2.16}
N_3\ll \sum_{q\le Q}q^{n-1-\frac{n+1}2}J^{-1}\sum_{j\le J}j^{\frac{n-3}2}\ll\delta^{-\frac{n-3}2}Q^\frac{n-1}{2},
\end{equation}
on recalling that $J=\lfloor \frac1{2\delta}\rfloor$.

Lastly we treat the most difficult case $\mathbf{k}\in \mathscr{K}_1$. In this case, we have, by \eqref{e2.23} again, that
\begin{align}
&\sum_{q\le Q}q^{n-1}I(j,\mathbf{k};q)\nonumber\\
\ll& w^*(\mathbf{k}/j)j^{-\frac{n-1}2}Q^{\frac{n-1}2}\min\left(\|jf^*(\mathbf{k}/j)\|^{-1},Q\right)+j^{-\frac{n+1}2}Q^{\frac{n-1}2},\label{e2.7}
\end{align}
where
$$
w^*(\mathbf{y})=w((\nabla f)^{-1}(\mathbf{y})).
$$

\begin{lem}\label{l4} 
If for all $M\ge1$ and $T\ge1$
\begin{equation}\label{e2.12}
\sum_{\substack{t\le T\\\mathbf{s}\in tV\\\|tf^*(\mathbf{s}/t)\|<M^{-1}}}w^*\left(\frac{\mathbf{s}}{t}\right)\le A \left(M^{-1}T^n+T^{\beta^*}\log 2T\right)
\end{equation} 
holds for some $\beta^*\ge n-1$ and $A>0$, then we have for all $M\ge1$ and $T\ge1$
\begin{equation}\label{e2.13}
\sum_{\substack{t\le T\\\mathbf{s}\in tV\\\|tf^*(\mathbf{s}/t)\|<M^{-1}}}w^*\left(\frac{\mathbf{s}}{t}\right)t^{-\frac{n-1}2}\le 2A\left( M^{-1}T^{\frac{n+1}2}+T^{\beta^*-\frac{n-1}2}\log 2T\right) 
\end{equation}
and
\begin{equation}\label{e2.14}
\sum_{\substack{t\le T\\\mathbf{s}\in tV\\\|tf^*(\mathbf{s}/t)\|\ge M^{-1}}}w^*\left(\frac{\mathbf{s}}{t}\right)t^{-\frac{n-1}2}\left\|tf^*\left(\frac{\mathbf{s}}t\right)\right\|^{-1}\le 6A\left( T^{\frac{n+1}2}\log M+MT^{\beta^*-\frac{n-1}2}\log 2T\right).
\end{equation}

\end{lem}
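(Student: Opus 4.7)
My plan is to derive both estimates from hypothesis \eqref{e2.12} via elementary summation arguments, with \eqref{e2.14} built directly on top of \eqref{e2.13}.

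For \eqref{e2.13}, the idea is to apply Abel summation against the monotone weight $g(t) = t^{-(n-1)/2}$. Writing $A(T)$ for the partial sum on the left-hand side of \eqref{e2.12} and letting $a_t$ denote its $t$-th summand, Abel summation reduces the desired quantity to a boundary term $A(T)\,T^{-(n-1)/2}$ plus a sum of $A(t)\bigl(g(t)-g(t+1)\bigr)$, where $g(t)-g(t+1)\le \tfrac{n-1}{2}t^{-(n+1)/2}$ by the mean value theorem. Substituting the hypothesis $A(u) \le A(M^{-1}u^n + u^{\beta^*})$ and comparing the resulting sum to the corresponding integral, the two pieces of \eqref{e2.12} give rise independently to the two terms on the right of \eqref{e2.13}. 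The convergence needed at the upper end of the integral arising from the $u^{\beta^*}$ term uses the assumption $\beta^* \ge n-1$ (in fact $\beta^* > (n-1)/2$ suffices). Tracking constants, the boundary term contributes coefficient $1$ to each piece while the integral contributes $(n-1)/(n+1) < 1$ and $(n-1)/(2\beta^*-(n-1)) \le 1$ respectively; summing these yields the target $2A$.

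For \eqref{e2.14}, I would decompose the constraint range $\|tf^*(\mathbf{s}/t)\| \in [M^{-1},1/2]$ dyadically into $O(\log M)$ blocks of the form $[2^j M^{-1},\,2^{j+1} M^{-1})$. On each such block, $\|tf^*(\mathbf{s}/t)\|^{-1} \le 2^{-j} M$, and the weighted count of $(\mathbf{s},t)$ in the block is controlled by replacing the block indicator by the larger indicator of $\|tf^*(\mathbf{s}/t)\| < 2^{j+1} M^{-1}$ and applying the already-established \eqref{e2.13} with $M$ replaced by $M/2^{j+1}$. This yields a per-block contribution of $O(T^{(n+1)/2}) + O(2^{-j} M \cdot T^{\beta^*-(n-1)/2})$; the first of these is uniform in $j$ while the second is geometric in $j$, so summing over the $O(\log M)$ blocks produces the two terms on the right of \eqref{e2.14}. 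A routine tracking of constants produces the explicit coefficient $6A$.

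The argument is entirely elementary, with no deep analytic obstruction: the only delicate aspect is the bookkeeping of constants, which is driven by the precise assumption $\beta^* \ge n-1$ controlling the geometric/power-series coefficients at both stages. The one conceptual check is that estimate \eqref{e2.13} may be applied uniformly across all dyadic scales in the second step, which is automatic since hypothesis \eqref{e2.12} is assumed uniformly in the parameter $M$.
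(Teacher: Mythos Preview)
Your proposal is correct and matches the paper's proof essentially step for step: partial (Abel) summation against the hypothesis \eqref{e2.12} for \eqref{e2.13}, followed by a dyadic decomposition of the range $[M^{-1},1/2]$ and an application of \eqref{e2.13} on each block for \eqref{e2.14}. Your constant bookkeeping, including the identification of $\beta^*\ge n-1$ as precisely the condition forcing $(n-1)/(2\beta^*-(n-1))\le 1$, also agrees with the paper.
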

\begin{proof}
By partial summation and \eqref{e2.12}, we have
\begin{align*}
&\sum_{\substack{t\le T\\\mathbf{s}\in tV\\\|tf^*(\mathbf{s}/t)\|<M^{-1}}}w^*\left(\frac{\mathbf{s}}{t}\right)t^{-\frac{n-1}2}\\
\le&A \left(M^{-1}T^n+T^{\beta^*}\log 2T\right)T^{-\frac{n-1}2}+\frac{n-1}2\int_1^TA \left(M^{-1}u^n+u^{\beta^*}\log 2u\right)u^{-\frac{n+1}2}du\\
\le&2A\left( M^{-1}T^{\frac{n+1}2}+T^{\beta^*-\frac{n-1}2}\log 2T\right). 
\end{align*}

 To prove \eqref{e2.14}, we cover the interval $[M^{-1},1/2]$ by the union of dyadic intervals $[2^{i-1}M^{-1},2^iM^{-1})$, $1\le i\le \log M/\log2$. It therefore follows that
\begin{align*}
&\sum_{\substack{t\le T\\\mathbf{s}\in tV\\\|tf^*(\mathbf{s}/t)\|\ge M^{-1}}}w^*\left(\frac{\mathbf{s}}{t}\right)t^{-\frac{n-1}2}\left\|tf^*\left(\frac{\mathbf{s}}t\right)\right\|^{-1}\\
\le&\sum_{i\le\log M/\log2}2^{1-i}M\sum_{\substack{t\le T\\\mathbf{s}\in tV\\\frac{2^{i-1}}{M}\le\left\|tf^*\left(\frac{\mathbf{s}}t\right)\right\|< \frac{2^i}M}}w^*\left(\frac{\mathbf{s}}{t}\right)t^{-\frac{n-1}2}\\
\overset{\eqref{e2.13}}{\le}&\sum_{i\le\log M/\log2}2A\left(2T^{\frac{n+1}2}+2^{1-i}MT^{\beta^*-\frac{n-1}2} \log 2T\right)\\
\le&6A\left(T^{\frac{n+1}2}\log M+MT^{\beta^*-\frac{n-1}2}\log 2T\right).
\end{align*}
\end{proof}

\begin{lem}\label{l5}
If for all $Q^*\ge1$ and $\delta^*\in(0,1/2]$
$$
N^{w^*}_{\mathcal{S}^*}(Q^*,\delta^*)\le A\left( 
\delta^*(Q^*)^{n}+(Q^*)^{\beta^*}\log 2Q^*\right)
$$ holds for some $\beta^*>n-1$ and $A\ge1$,
then for all $Q\ge1$ and $\delta\in(0,1/2]$
$$
N^w_{\mathcal{S}}(Q,\delta)\le C_0\delta Q^{n}+CA Q^{\beta}\log 2Q
$$
holds, with $C_0$ as in \eqref{e2.2}, for
$$
\beta:=n-\frac{n-1}{2\beta^*-n+1}.
$$
Here the positive constants $C_0$ and $C$ only depend on $\su$ and $w$. 
\end{lem}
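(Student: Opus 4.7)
My plan is to feed the hypothesis directly into the machinery already developed in this section. The starting point is \eqref{e2.5} together with the decomposition $\Z^{n-1}=\mathscr{K}_1\sqcup\mathscr{K}_2\sqcup\mathscr{K}_3$. The contributions $N_2$ from $\mathscr{K}_2$ and $N_3$ from $\mathscr{K}_3$ have already been controlled in \eqref{e2.15} and \eqref{e2.16}, giving $N_2\ll\log Q$ and $N_3\ll Q^{(n-1)/2}\delta^{-(n-3)/2}$. Only the main contribution $N_1$ from $\mathscr{K}_1$ remains, and this is precisely where the dual counting hypothesis enters.

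For $N_1$ I would insert \eqref{e2.7} and split the factor $\min(\|jf^*(\mathbf{k}/j)\|^{-1},Q)$ according to whether $\|jf^*(\mathbf{k}/j)\|<Q^{-1}$ (in which case the factor is $Q$) or $\|jf^*(\mathbf{k}/j)\|\ge Q^{-1}$ (in which case it is $\|jf^*(\mathbf{k}/j)\|^{-1}$). The resulting inner sums are precisely those handled by Lemma \ref{l4}, whose hypothesis \eqref{e2.12} is nothing but a restatement of the dual counting bound assumed in Lemma \ref{l5} (with $T=Q^*$, $M^{-1}=\delta^*$). Invoking Lemma \ref{l4} with $T=J\asymp\delta^{-1}$ and $M=Q$ on both pieces, and bookkeeping the trivial tail from the $j^{-(n+1)/2}Q^{(n-1)/2}$ term of \eqref{e2.7}, yields
\begin{equation*}
N^w_{\mathcal{S}}(Q,\delta)\le C_0\delta Q^n+C'A\bigl(Q^{(n+1)/2}\delta^{(n+1)/2-\beta^*}+Q^{(n-1)/2}\delta^{-(n-1)/2}\log Q+Q^{(n-1)/2}\delta^{-(n-3)/2}\bigr)
\end{equation*}
for an absolute constant $C'$.

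The decisive step is the bootstrap via monotonicity. Balancing $C_0\delta Q^n$ against the first error term $C'AQ^{(n+1)/2}\delta^{(n+1)/2-\beta^*}$ produces the threshold $\delta_0\asymp Q^{-(n-1)/(2\beta^*-n+1)}$, at which both quantities are $\asymp Q^\beta$ with the prescribed $\beta$. The hypothesis $\beta^*>n-1$ forces $\delta_0\ge Q^{-1}$, which automatically bounds the logarithmic term by $Q^{(n-1)/2}\delta_0^{-(n-1)/2}\le Q^{n-1}$, and the same hypothesis (which easily implies $\beta^*>n(n-1)/(n+1)$) gives $Q^{(n-1)/2}\delta_0^{-(n-3)/2}\le Q^\beta$. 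If $\delta\ge\delta_0$ every error term is monotone-decreasing in $\delta$ and dominated by its value at $\delta_0$, so the stated bound is immediate. If $\delta<\delta_0$, I invoke the monotonicity of the counting function itself, $N^w_{\mathcal{S}}(Q,\delta)\le N^w_{\mathcal{S}}(Q,\delta_0)$, and the previous case at $\delta_0$ gives $\le C_0\delta_0 Q^n+CA\max\{Q^\beta,Q^{n-1}\log Q\}$; since $C_0\delta_0 Q^n\ll AQ^\beta$, this is absorbed into the error.

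The main obstacle is conceptual rather than technical. The stationary-phase bound \eqref{e2.23} is essentially sharp, and the raw $\delta$-dependent bound for $N_1$ blows up as $\delta\to 0$; no further oscillatory-integral analysis can rescue the argument in isolation. The key observation is that $N^w_{\mathcal{S}}(Q,\delta)$ is itself monotone non-decreasing in $\delta$, so one is free to replace an inconveniently small $\delta$ by a larger, $Q$-controlled threshold $\delta_0$. This substitution decouples the $\delta$-dependence of the error from the $C_0\delta Q^n$ main-term budget and produces a new exponent $\beta<\beta^*$ (since $\beta^*>n-1$). This is precisely the single iteration step that, applied repeatedly in the proof of Theorem \ref{t2}, drives $\beta^*$ down to its optimal value; verifying that all three error terms, evaluated at $\delta_0$, fit under $\max\{Q^\beta,Q^{n-1}\log Q\}$ is the chief numerical nuisance but follows cleanly from $\beta^*>n-1$.
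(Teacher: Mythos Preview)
Your proposal is correct and follows essentially the same route as the paper: you feed the dual hypothesis into Lemma~\ref{l4} via \eqref{e2.7} to control $N_1$, combine with the already-established bounds \eqref{e2.15}, \eqref{e2.16} for $N_2$, $N_3$, and then exploit the monotonicity of $N^w_{\mathcal{S}}(Q,\delta)$ in $\delta$ at the threshold $\delta_0=Q^{\beta-n}$, exactly as the paper does. The only cosmetic difference is that the paper absorbs the $Q^{(n-1)/2}\delta^{-(n-3)/2}$ contribution from $N_3$ into the logarithmic term (using $\delta\le1/2$ and $A\ge1$), whereas you carry it separately and verify it at $\delta_0$; both treatments are equivalent.
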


\begin{proof} First of all, it follows from $\beta^*>n-1$ that $\beta>n-1$.
By \eqref{e2.7} and Lemma \ref{l4}, we have
\begin{equation*}
N_1\ll \left(A\left(J^{\frac{n+1}2}\log Q+QJ^{\beta^*-\frac{n-1}2}\log 2J\right)+\sum_{j=1}^J\sum_{\mathbf{k}\in \mathscr{K}_1}j^{-\frac{n+1}2}\right)\frac{Q^{\frac{n-1}2}}J,
\end{equation*}
where $J=\lfloor \frac1{2\delta}\rfloor$.
Therefore on noting that 
$$
|\mathscr{K}_1|\ll j^{n-1},
$$
we have
\begin{equation}\label{e2.22}
N_1\ll A\left(\delta^{-\frac{n-1}2}Q^{\frac{n-1}2}\log Q+\delta^{\frac{n+1}2-\beta^*}Q^{\frac{n+1}2}\log \frac1\delta\right).
\end{equation}

Now it follows from \eqref{e2.5}, \eqref{e2.15}, \eqref{e2.16} and \eqref{e2.22} that
\begin{equation}\label{e2.8}
N^w_{\mathcal{S}}(Q,\delta)\le C_0\delta Q^n+C_1A\left(\delta^{-\frac{n-1}2}Q^{\frac{n-1}2}\log Q+\delta^{\frac{n+1}2-\beta^*}Q^{\frac{n+1}2}\log \frac1\delta\right),
\end{equation}
where the positive constants $C_0$ and $C_1$ only depend on $\su$ and $w$.

Next notice that $N^w_{\mathcal{S}}(Q,\delta)$ is an increasing function of $\delta$ when $Q$ is fixed, so when $\delta\le Q^{\beta-n}$ we have
\begin{align*}
N^w_{\mathcal{S}}(Q,\delta)&\le N^w_{\mathcal{S}}(Q,Q^{\beta-n})\\
&\overset{\eqref{e2.8}}{\le}C_0 Q^\beta+C_1A\left(Q^{\frac{(n-1)(n-\beta)}2}Q^{\frac{n-1}2}\log Q+Q^{(n-\beta)(\beta^*-\frac{n+1}2)}Q^{\frac{n+1}2}\log Q^{n-\beta}\right)\\
&\le  (2C_0+C_1A)Q^{\beta}\log 2Q+C_1AQ^{\frac{n-1}2(n-\beta+1)}\log Q\\
&\le(2C_0+C_1A)Q^{\beta}\log 2Q+C_1AQ^{n-1}\log Q,
\end{align*}
where in the second last inequality we use the identity $(n-\beta)(\beta^*-\frac{n+1}2)+\frac{n+1}2=\beta$, and in the last inequality we apply the inequality $n-\beta+1<2$. Let $C=2(C_0+C_1)$. It then follows that when $\delta\le Q^{\beta-n}$ we have
$$N^w_{\mathcal{S}}(Q,\delta)\le CAQ^{\beta}\log 2Q.$$

On the other hand, when $\delta\ge Q^{\beta-n}$ we have
\begin{align*}
N^w_{\mathcal{S}}(Q,\delta)\overset{\eqref{e2.8}}{\le}&C_0\delta Q^n+C_1A\left(Q^{\frac{(n-1)(n-\beta)}2}Q^{\frac{n-1}2}\log Q+Q^{(n-\beta)(\beta^*-\frac{n+1}2)}Q^{\frac{n+1}2}\log Q^{n-\beta}\right)\\
\le & C_0\delta Q^{n}+C_1A\left(Q^{\beta}\log Q+Q^{n-1}\log Q\right)\\
\le &C_0\delta Q^{n}+CAQ^{\beta}\log 2Q,
\end{align*}
since in \eqref{e2.8} both  $\delta^{-\frac{n-1}2}$ and $\delta^{\frac{n+1}2-\beta^*}\log\frac1\delta$ are decreasing functions in $\delta$.

So, either way, the lemma follows.
\end{proof}

Now we are poised to prove \eqref{e2.19} and therefore complete the proof of Theorem \ref{t2}. Due to the projective duality, $f^{**}=f$, \eqref{e2.20} and \eqref{e2.21}, we may apply Lemma \ref{l5} successively. We simply observe that Lemma \ref{l5} also holds when the roles of $N_\su^w(Q,\delta)$ and $N_{\su^*}^{w^*}(Q^*,\delta^*)$ are switched, so we may adjust the constants $C_0$ and $C$ if necessary to make it work in both scenarios. 

To be more precise, we start with the trivial bound
$$
N^w_{\mathcal{S}}(Q,\delta)\ll Q^n,
$$
and bootstrap it by Lemma \ref{l5} repeatedly.  The first application yields the bound
$$
N^w_{\mathcal{S}}(Q,\delta)\ll \delta Q^n+Q^{n-1+\frac2{n+1}}\log Q,
$$
which already recovers (actually improves) the bound \eqref{e1.6}.
Further iteration yields even better bounds.
Let $\beta_1=n$
and
\begin{equation}\label{e2.9}
\beta_i=n-\frac{n-1}{2\beta_{i-1}-n+1}\quad\text{for }i\ge2.
\end{equation}
After iterating $i$ times, we arrive at a bound in the form
\begin{equation}\label{e2.10}
N^w_{\mathcal{S}}(Q,\delta)\ll \delta Q^n+C^iQ^{\beta_i}\log Q.
\end{equation}

From \eqref{e2.9}, we obtain
\begin{equation}\label{e2.11}
\beta_i-(n-1)=\frac{\beta_{i-1}-(n-1)}{\beta_{i-1}-\frac{n-1}2}.
\end{equation}
Note that when $n\ge 4$,
$$
\beta_{i-1}-\frac{n-1}2>\frac{n-1}2\ge\frac32.
$$
This shows that in general the sequence $\{\beta_i\}$ decreases exponentially to $n-1$ as $i$ increases. In this case we choose
$$
i=\left\lfloor\frac{\log\log Q}{\log (3/2)} \right\rfloor.
$$
Then \eqref{e2.10} becomes
$$
N^w_{\mathcal{S}}(Q,\delta)\ll \delta Q^n+Q^{n-1}(\log Q)^\kappa.
$$
The case $n=3$ is a little more tricky. Indeed the recursive relation \eqref{e2.11} in this case reads
$$
\beta_i-2=\frac{\beta_{i-1}-2}{\beta_{i-1}-1},
$$
which can also be written as
$$
\frac1{\beta_i-2}=1+\frac1{\beta_{i-1}-2}.
$$
Therefore
$$
\beta_i=2+\frac1i,
$$
which still converges to the expected value 2, but at a considerably slower rate. Now we need to optimize $C^iQ^{1/i}$, so the optimal choice is 
$$
i=\lfloor\sqrt{\log Q}\rfloor.
$$
Thus we obtain the bound
$$
N^w_{\mathcal{S}}(Q,\delta)\ll \delta Q^3+Q^{2}\exp(\tau\sqrt{\log Q}),
$$
where $\tau>0$ is a constant depending only on $\mathcal{S}$ and $w$.

\section{Proof of Theorem \ref{t1}}\label{s8}
We sketch the proof of Theorem \ref{t1} modulo Theorem \ref{t2}. A key step is the replacement of the Fej\'er kernel used in Section \ref{s7} by Selberg's magic functions. This way, there is no loss in the dealing of the main term and the asymptotic formula in Theorem \ref{t1} can therefore be established as the result of the duality argument by inserting \eqref{e2.19} as the input. This strategy was first used in \cite{hua1} to obtain the asymptotic formula \eqref{e1.8} for planar curves. 

First we recall some basic properties of Selberg's magic functions. See \cite[Chapter 1]{mo} for details about the construction of these functions.

Let $I=(\alpha, \beta)$ be an arc of $\R/\Z$ with $\alpha<\beta<\alpha+1$ and $\rchi_I(x)$ be its  characteristic function. Then there exist finite trigonometric polynomials of degree at most $J$
$$S^{\pm}_J(x)=\sum_{|j|\le J}\hat{S}^{\pm}_J(j)e(jx)$$ such that 
$$
S^{-}_J(x)\le \rchi_I(x)\le S^{+}_J(x)
$$
and
$$
\hat{S}^{\pm}_J(0)=\beta-\alpha\pm\frac1{J+1}
$$
and
$$
|\hat{S}^{\pm}_J(j)|\le\frac1{J+1}+\min\left(\beta-\alpha,\frac1{\pi|j|}\right)
$$
for $0<|j|\le J$.

Now let $\alpha=-\delta$ and $\beta=\delta$. Then singling out the term with $j=0$, we have
\begin{equation}\label{e6.1}
|N_\mathcal{S}^w(Q,\delta)-2\delta N_0|\le \frac{N_0}{J+1}+2\sum_{j=1}^Jb_j\left|\sum_{\substack{\mathbf{a}\in \Z^{n-1}\\ q\le Q}}w(\mathbf{a}/q)e(jqf(\mathbf{a}/q))\right|
\end{equation}
where
$$N_0=\sum_{\substack{\mathbf{a}\in \Z^{n-1}\\ q\le Q}}w(\mathbf{a}/q)$$
and
$$b_j=\frac1{J+1}+\min\left(\beta-\alpha,\frac1{\pi j}\right).$$

Here, it is worth noting that  $J$ is a free parameter at our disposal, but it will turn out at the end of the proof that a good choice is $J=Q$.

Now by the Poisson summation formula, we have
\begin{equation}\label{e6.2}
N_0=\sum_{q\le Q}\sum_{\mathbf{k\in\Z^{n-1}}}q^{n-1}\hat{w}(q\mathbf{k})=\frac{\hat{w}(0)}nQ^n+O(Q^{n-1})
\end{equation}
in view of the rapid decay of the Fourier transform $\hat{w}$.

Now to estimate the sum inside the absolute value on the right hand side of \eqref{e6.1}, we follow the same argument in Section \ref{s7} and obtain that for $0<j\le J$
\begin{align}
&\nonumber\sum_{\substack{\mathbf{a}\in \Z^{n-1}\\ q\le Q}}w\left(\frac{\mathbf{a}}q\right)e\left(jqf\left(\frac{\mathbf{a}}q\right)\right)\\ 
=&\nonumber\sum_{q\le Q}q^{n-1}\sum_{\mathbf{k}\in\Z^{n-1}}I(j,\mathbf{k};q)\\
\ll &\sum_{\mathbf{k}}w^{*}\left(\frac{\mathbf{k}}j\right)j^{-\frac{n-1}2}Q^{\frac{n-1}2}\min\left(\left\|jf^*\left(\frac{\mathbf{k}}j\right)\right\|^{-1}, Q\right)+j^{\frac{n-3}2}Q^{\frac{n-1}2}\label{e6.5}
\end{align}
where the last inequality follows from \eqref{e2.17}, \eqref{e2.18} and \eqref{e2.7}.

Now we need a slight variant of Lemma \ref{l4}.

\begin{lem}\label{l6} 
We have
\begin{equation*}
\sum_{\substack{j\le J\\\mathbf{k}\in jV\\\|jf^*(\mathbf{k}/j)\|<Q^{-1}}}b_j w^*(\mathbf{k}/j)j^{-\frac{n-1}2}\ll_{\mathcal{S}^*,w^*} Q^{-1}J^{\frac{n-1}2}+G_n(J) 
\end{equation*}
and
\begin{equation*}
\sum_{\substack{j\le J\\\mathbf{k}\in jV\\\|jf^*(\mathbf{k}/j)\|\ge Q^{-1}}}b_j w^*(\mathbf{k}/j)j^{-\frac{n-1}2}\|jf^*(\mathbf{k}/j)\|^{-1}\ll_{\mathcal{S}^*,w^*} J^{\frac{n-1}2}\log Q+G_n(J)Q,
\end{equation*}
where
$$
G_3(J)=\exp(c_1\sqrt{\log J})
$$
and
$$
G_n(J)=J^{\frac{n-3}2}(\log J)^\kappa,\quad n\ge4
$$
where $c_1$ and $\kappa$ only depend on $\mathcal{S^*}$ and $w^*$.\end{lem}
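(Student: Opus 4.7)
The plan is to proceed in the spirit of Lemma \ref{l4}, with Theorem \ref{t2} itself (applied to the dual hypersurface $\mathcal{S}^*$) playing the role of the abstract hypothesis \eqref{e2.12}. The key preliminary observation is that the error terms in \eqref{e1.14}--\eqref{e1.15} can be rewritten uniformly as
$$E_n(T) = T^{(n+1)/2}\,G_n(T),$$
so that dividing by the weight $j^{-(n-1)/2}$ inherited from \eqref{e6.5} will produce the factor $G_n(J)$ appearing in the conclusion.

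By \eqref{e2.21}, the dual function $f^*$ has nonvanishing Hessian on $\mathcal{R}$, so Theorem \ref{t2} (in the smoothed form \eqref{e2.19}) applies to $\mathcal{S}^*$ with weight $w^*$, and gives, writing $F(T)$ for the dual analogue of $N_{\mathcal{S}}^w$,
$$F(T) := \sum_{\substack{t \le T,\; \mathbf{s} \in tV \\ \|tf^*(\mathbf{s}/t)\| < Q^{-1}}} w^*(\mathbf{s}/t) \ll Q^{-1} T^n + T^{(n+1)/2} G_n(T) \qquad (T \ge 1).$$

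For the first inequality, split $b_j \le (J+1)^{-1} + (\pi j)^{-1}$ and bound the sum by
$$\frac{1}{J+1}\sum w^*(\mathbf{k}/j)\,j^{-(n-1)/2} + \frac{1}{\pi}\sum w^*(\mathbf{k}/j)\,j^{-(n+1)/2},$$
both sums being under the same constraints as in the lemma statement. Apply Abel summation with exponent $s \in \{(n-1)/2,(n+1)/2\}$ to the counting input above. For $s = (n+1)/2$ the boundary term $F(J)J^{-(n+1)/2}$ is already $Q^{-1}J^{(n-1)/2} + G_n(J)$, and the integral $\int_1^J F(u)u^{-s-1}du$ is of the same order (the mild divergence $\int_1^J G_3(u)u^{-1}du \ll \sqrt{\log J}\,G_3(J)$ in the case $n=3$ is absorbed by enlarging $c_1$). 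For $s = (n-1)/2$ one obtains $Q^{-1}J^{(n+1)/2} + JG_n(J)$, which after the prefactor $1/(J+1)$ reduces to the same form. In both instances the resulting bound is $Q^{-1}J^{(n-1)/2} + G_n(J)$, establishing the first inequality.

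For the second inequality, partition $[Q^{-1}, 1/2)$ into the dyadic pieces $[2^{i-1}Q^{-1}, 2^i Q^{-1})$ for $1 \le i \le \lceil\log_2 Q\rceil$, exactly as in the proof of \eqref{e2.14}. On the $i$-th piece $\|jf^*(\mathbf{k}/j)\|^{-1} \le 2^{1-i}Q$, and invoking the first inequality with $Q$ replaced by $Q/2^i$ bounds the inner sum by $2^i Q^{-1}J^{(n-1)/2} + G_n(J)$. Multiplying by $2^{1-i}Q$ produces $2J^{(n-1)/2} + 2^{1-i}QG_n(J)$; summing over $i$ yields $J^{(n-1)/2}\log Q + QG_n(J)$ as claimed. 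The only step that is not a routine adaptation of Lemma \ref{l4} is the bookkeeping to reconcile the shape $E_n(T) = T^{(n+1)/2}G_n(T)$, which is where the extra exponent-$\tfrac{n+1}{2}$ factor from the stationary-phase weight in \eqref{e6.5} conspires with the best-possible input from Theorem \ref{t2} to give exactly the clean form of the conclusion.
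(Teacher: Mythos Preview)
Your proof is correct and matches the paper's approach: apply Theorem~\ref{t2} to the dual surface $\mathcal{S}^*$ to obtain the counting input $F(T)\ll Q^{-1}T^n+E_n(T)$, then run the partial-summation and dyadic arguments of Lemma~\ref{l4}. The paper streamlines your first step by simply noting $b_j\ll 1/j$ (since $1/(J+1)\le 1/j$ for $j\le J$), so only your case $s=(n+1)/2$ is needed and the separate treatment of the $(J+1)^{-1}$ piece with $s=(n-1)/2$ is superfluous, though harmless.
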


\begin{proof}
By applying Theorem \ref{t2} to $\mathcal{S}^*$, we have
\begin{equation}\label{e6.4}
\sum_{\substack{j\le J\\\mathbf{k}\in jV\\\|jf^*(\mathbf{k}/j)\|<Q^{-1}}}w^*(\mathbf{k}/j)\ll_{\mathcal{S}^*,w^*}Q^{-1}J^n+E_n(J),
\end{equation}
where
$$
E_3(J)=J^{2}\exp(c\sqrt{\log J})
$$
and
$$
E_n(J)=J^{n-1}(\log J)^\kappa,\quad n\ge4
$$
where $c$ and $\kappa$ only depend on $\mathcal{S^*}$ and $w^*$.\\

Now note that $b_j\ll 1/j$. Then the lemma follows from partial summation and \eqref{e6.4}, the same way as in the proof of Lemma \ref{l4}.
\end{proof}

By \eqref{e6.1}, \eqref{e6.2}, \eqref{e6.5} and Lemma \ref{l6}, we obtain that
\begin{align*}
|N_\mathcal{S}^w(Q,\delta)-\frac{2\hat{w}(0)}{n} \delta Q^n|\ll& \delta Q^{n-1}+Q^{n}/J+J^{\frac{n-1}2}Q^{\frac{n-1}2}\log Q+Q^{\frac{n+1}2}G_n(J)\\
\stackrel{J=Q}{\ll} &E_n(Q),
\end{align*}
which completes the proof of Theorem \ref{t1}.

\section{Proof of Theorem \ref{t3}}\label{s6}
Let $K={x}_0+\mathcal{B}$, where $\mathcal{B}$ is a compact convex set which contains the origin as an interior point. For convenience, we assume $x_0=0$; the general case can be proved similarly. Denote by ${\varrho}>0$ the radius of any inscribed ball $\varrho B^{n-1}\subseteq \mathcal{B}$ where $B^{n-1}$ is the $n-1$ dimensional unit ball. For any $\varepsilon>0$, let $\tau_{\pm}:=1\pm\varepsilon/\varrho$. Consider open balls centered at points in $\mathcal{B}$ with radii less than $\varepsilon$, such that $\nabla f$ is a diffeomorphism on each of the balls. By compactness, let $\{W_i\}$ be a finite open cover of $\mathcal{B}$ consisting of such open balls.  Likewise, let $\{ Y_j\}$ be such a finite open cover for $\tau_-\mathcal{B}$, where $\{Y_j\}$ consists of open balls centered at points in $\tau_-\mathcal{B}$. 

First of all, we recall the following property of convex sets \cite[p. 94, eq. (10)]{mu}. For $d>0$
\begin{equation}\label{e7.1}
y\in\mathcal{B}, \quad z\not\in(1+d)\mathcal{B}\quad\Rightarrow\quad |z-y|>\varrho d.
\end{equation}
We claim that 
$$
\tau_{-} \mathcal{B}\subseteq \cup Y_j\subseteq \mathcal{B}\subseteq \cup W_i\subseteq \tau_+\mathcal{B}.
$$
We only need to prove the second and the fourth inclusions. Let $z\not\in\mathcal{B}$ and $y=\tau_-y_0\in\tau_{-}\mathcal{B}$ with $y_0\in\mathcal{B}$; then by \eqref{e7.1}
$$
|z-y|=\tau_{-}|\tau_{-}^{-1}z-y_0|>\tau_{-} \rho(\tau_{-}^{-1}-1)=\varepsilon.
$$
Hence  $\cup Y_j\subseteq \mathcal{B}$. Similarly $ \cup W_i\subseteq \tau_+\mathcal{B}$.

Now let $\{w^+_i\}$ and $\{w^-_j\}$ be smooth partitions of unity of $\mathcal{B}$ and $\tau_{-}\mathcal{B}$ subordinate to $\{W_i\}$ and $\{ Y_j\}$ respectively. Thus
\begin{equation}\label{e7.2}
\rchi_{\tau_-\mathcal{B}}\le \sum w^-_j\le \rchi_{\mathcal{B}}\le \sum w^+_i\le \rchi_{\tau_+\mathcal{B}}
\end{equation}
and
$$
\sum_j N^{w^-_j}_\mathcal{S}(Q,\delta)\le N_\mathcal{S}^\mathcal{B}(Q,\delta)\le \sum_i N^{w^+_i}_\mathcal{S}(Q,\delta).
$$
We then apply Theorem \ref{t1} to each of $N^{w^-_j}_\mathcal{S}(Q,\delta)$ and $N^{w^+_i}_\mathcal{S}(Q,\delta)$, and obtain
$$
\sum_i N^{w^+_i}_\mathcal{S}(Q,\delta)=\sum_i\frac{2\hat{w}^+_i(0)}n\delta Q^n+O_{\varepsilon}(E_n(Q))
$$
and
$$
\sum_j N^{w^-_j}_\mathcal{S}(Q,\delta)=\sum_j\frac{2\hat{w}^-_j(0)}n\delta Q^n+O_{\varepsilon}(E_n(Q)).
$$
Note that by \eqref{e7.2}
$$
\sum_i \hat{w}^+_i(0)=\int\sum_i w^+_i\le |\tau_+\mathcal{B}|
$$
and
$$
\sum_j \hat{w}^-_j(0)=\int\sum_j w^-_j\ge |\tau_-\mathcal{B}|.
$$
Now the theorem follows on observing that
$$
|\tau_\pm\mathcal{B}|=|\mathcal{B}|+O(\varepsilon).
$$

\section{Proof of Theorem \ref{t4}}\label{s5}
It suffices to prove the theorem on a small open neighborhood of any point, then the general case follows by the compactness. For every $\mathbf{x}_0$ in a chart $(\mathbf{x},f(\mathbf{x}))$ there exists a unit vector $\mathbf{u}\in\R^k$ such that
$$
\det\left\{\frac{\partial^2(\mathbf{u}\cdot\mathbf{f})}{\partial x_i\partial x_j}(\mathbf{x}_0)\right\}_{m\times m}\neq 0.
$$ 
By the continuity of the above determinant with respect to $\mathbf{u}$ and the density of the rationals on the unit sphere,
we may find a unit rational vector $\frac{\mathbf{s}}r:=(\frac{s_1}r,\frac{s_2}r,\ldots,\frac{s_k}r)$, a constant $c>0$ and a neighborhood $U$ of $\mathbf{x}_0$ such that
\begin{equation}\label{e5.1}
\left|\det\left\{\frac{\partial^2(\frac{\mathbf{s}}r\cdot\mathbf{f})}{\partial x_i\partial x_j}(\mathbf{x})\right\}_{m\times m}\right|\ge c,\quad \mathbf{x}\in U.
\end{equation}

Consider the projection
$$
(x_1,x_2,\ldots, x_m, f_1, f_2, \ldots, f_k)\rightarrow\left(x_1,x_2,\ldots, x_m, r^{-1}(s_1f_1+s_2 f_2+\ldots+ s_kf_k)\right),
$$
which sends a rational point 
$$
\left(\frac{a_1}q,\frac{a_2}q,\ldots,\frac{a_m}q,\frac{b_1}q,\ldots,\frac{b_k}q\right)
$$
to
$$
\left(\frac{a_1}q,\frac{a_2}q,\ldots,\frac{a_m}q,\frac{\mathbf{s}\cdot\mathbf{b}}{qr}\right).
$$
Let $\mathcal{S}$ be the resulting hypersurface in $\mathbb{R}^{m+1}$, then $\mathcal{S}$ has Gaussian curvature bounded away from 0 in view of \eqref{e5.1}. So we clearly have
$$
N_X(B)\le N_\mathcal{S}(Br,0).
$$
Now Theorem \ref{t4} follows by applying Theorem \ref{t2} ($m\ge2$) or \eqref{e1.5} ($m=1$) to $\mathcal{S}$ and setting $\delta=0$.

\section{Proof of Theorem \ref{t5}}\label{s9}
The proof of Theorem \ref{t5}  modulo Theorem \ref{t2} is routine and well known. We include it here for completeness. Since Theorem \ref{t5} is a \emph{zero-versus-one} measure theoretic law, it suffices to prove it locally. By the implicit function theorem, a local chart of a $C^l$ hypersurface in $\R^n$ can be given by the Monge parametrization,
$$\mathcal{S}:=\{(x_1, x_2, \ldots, x_{n-1}, f(\mathbf{x})): \mathbf{x}=(x_1, x_2, \ldots, x_{n-1})\in I\}$$ 
where $I=[0,1]^{n-1}$ and $f\in C^l(I)$.
Let $\Omega_n(f, \psi)$ denote the projection of $\mathcal{S}\cap \mathscr{S}_n(\psi)$ onto $I$. Since this projection is bi-Lipschitz, we know that
$$\mathcal{H}^s(\mathcal{S}\cap \mathscr{S}_n(\psi))=0\quad\Longleftrightarrow \quad\mathcal{H}^s(\Omega_n(f, \psi))=0.$$
 Note that the set $B=\{\mathbf{x}\in I: \det \nabla^2f =0\}$ is closed and $\mathcal{H}^s(B)=0$ by assumption,  therefore $I\backslash B$ can be written as a countable union of closed hyperrectangles $I_i$ on which $f$ satisfies \eqref{e1.10}. The constants $c_1$ and $c_2$  associated with \eqref{e1.10} depend on the particular choices of $I_i$.  Now it suffices to prove that $\mathcal{H}^s(\Omega_n(f,\psi)\cap I_i)=0$ for all $i$, as this would imply that
\begin{align*} 
\mathcal{H}^s(\Omega_n(f,\psi))&\le \mathcal{H}^s(B\cup(\cup_{i=1}^\infty\Omega_n(f,\psi)\cap I_i))\\
&\le \mathcal{H}^s(B)+\sum_{i=1}^\infty\mathcal{H}^s(\Omega_n(f,\psi)\cap I_i)=0
\end{align*}
since $\mathcal{H}^s(B)=0$ by the hypothesis of Theorem \ref{t5} and therefore establish Theorem \ref{t5}. So without loss of generality, we may assume $f$ satisfies \eqref{e1.10} on $I$. Having set up the question in context, we now show that  for $s>\frac{n-1}2$ we have $\mathcal{H}^s(\Omega_n(f,\psi))=0$ under the condition 
\begin{equation}\label{e9.0}
\sum_{q=1}^\infty \psi(q)^{s+1}q^{n-1-s}<\infty.
\end{equation}

First of all, $\mathcal{H}^s(\Omega_n(f,\psi))=0$ automatically for all $s>n-1$. So we may assume that $s\le n-1$. Next choose $\eta>0$ such that $\eta<(2s-n+1)/(s+1)$. It is easily verified that there is no loss of generality in assuming that 
\begin{equation}\label{e9.2}
\psi(q)\ge q^{-1+\eta} \quad\text{for all }q,
\end{equation}
because if \eqref{e9.2} fails, one may replace $\psi$ with $\hat\psi(q):=\max(\psi(q), q^{-1+\eta})$ which satisfies \eqref{e9.2}.

We recall that $\Omega_n(f,\psi)$ consists of points $\mathbf{x}\in I$ such that
the system of inequalities 
\[
\left\{
\begin{array}{ll}
|x_i-\frac{a_i}q|<\frac{\psi(q)}q,&1\le i\le n-1\\
|f(\mathbf{x})-\frac{b}q|<\frac{\psi(q)}q,&
\end{array}
\right.
\]
is satisfied for infinitely many $(q, \mathbf{a}, b)\in\N\times\Z^{n-1}\times \Z$. It is without loss of generality to assume that $\frac{\mathbf{a}}q\in I$ for solutions of the above inequalities. For $\frac{\mathbf{p}}q\in\Q^n$ with $\mathbf{p}=(\mathbf{a},b)\in\Z^{n-1}\times\Z$, let $\sigma(\mathbf{p}/q)$ denote the set of $\mathbf{x}\in I$ satisfying the above inequality. Trivially we have
\begin{equation}\label{e9.1}
\text{diam}\left(\sigma\left(\frac{\textbf{p}}q\right)\right)\ll \frac{\psi(q)}q
\end{equation}
where the implied constant only depends on $n$.

Now assume that $\sigma(\mathbf{p}/q)\not=\emptyset$ and let $\textbf{x}\in\sigma(\mathbf{p}/q)$. Since $f\in C^l(I)$, $f$ is Lipschitz. It follows by the triangle inequality and the Lipschitz inequality that 
\begin{align*}
\left|f\left(\frac{\textbf{a}}q\right)-\frac{b}q\right|&\le\left|f(\mathbf{x})-\frac{b}q\right|+\left|f\left(\frac{\textbf{a}}q\right)-f(\mathbf{x})\right|\\
&\le \frac{\psi(q)}q+c_3\left|\mathbf{x}-\frac{\mathbf{a}}q\right|\\
&\le c_4\frac{\psi(q)}q,
\end{align*}
where $c_3$ and $c_4$ are constants that depend only on $f$. Thus for $i\ge0$
\begin{align*}
&\#\{\mathbf{p}/q\in\Q^{n}: 2^i\le q<2^{i+1}, \sigma(\mathbf{p}/q)\not=\emptyset\}\\
\le&\#\{\mathbf{p}/q\in\Q^{n}: 2^i\le q<2^{i+1}, \mathbf{a}/q\in I, |f(\mathbf{a}/q)-b/q|\le c_4\psi(q)/q\}\\
\le&\#\{\mathbf{a}/q\in\Q^{n-1}: q<2^{i+1}, \mathbf{a}/q\in I, \|qf(\mathbf{a}/q)\|\le c_4\psi(2^i)\}
\end{align*}
where in the last inequality the monotonicity assumption on $\psi$ is  used. Now in view of \eqref{e9.2}, Theorem \ref{t2} implies that
\begin{equation}\label{e9.3}
\#\{\mathbf{p}/q\in\Q^{n}: 2^i\le q<2^{i+1}, \sigma(\mathbf{p}/q)\not=\emptyset\}\ll \psi(2^i)2^{ni}.
\end{equation}

Recall that $\Omega_n(f,\psi)=\limsup\sigma(\mathbf{p}/q)$, which consists of $\mathbf{x}\in I$ lying in infinitely many $\sigma(\mathbf{p}/q)$. In view of \eqref{e9.1} and \eqref{e9.3}, we have
\begin{align*}
&\sum_{q=1}^\infty\sum_{\mathbf{p}\in\Z^{n}} \text{diam}\left(\sigma\left(\frac{\mathbf{p}}q\right)\right)^s\\
\ll& \sum_{i=0}^\infty \#\{\mathbf{p}/q\in\Q^{n}: 2^i\le q<2^{i+1}, \sigma(\mathbf{p}/q)\not=\emptyset\}\psi(2^i)^s/2^{is}\\
\ll &\sum_{i=0}^\infty \psi(2^i)^{1+s}2^{i(n-s)}\\
\ll &\sum_{q=1}^\infty \psi(q)^{1+s}q^{n-s-1}
\end{align*}
which by \eqref{e9.0} is convergent. Hence the Hausdorff-Cantelli Lemma \cite[p. 68]{BeD} implies that $\mathcal{H}^s(\Omega_n(f,\psi))=0$ and this completes the proof of Theorem \ref{t5}.

\bigskip

\proof[Acknowledgements]
The author is very grateful to the anonymous referees for providing  very detailed lists of comments, corrections and suggestions which greatly improved the presentation of this manuscript.

\end{document}